\newcommand{\rmd}{\mathrm{d}}
\newcommand{\rmD}{\mathrm{D}}
\newcommand{\rmL}{\mathrm{L}}
\newcommand{\sch}{\textsc{h}}
\newcommand{\bbG}{\mathbb{G}}
\newcommand{\bbN}{\mathbb{N}}
\newcommand{\bbR}{\mathbb{R}}
\newcommand{\bbS}{\mathbb{S}}
\newcommand{\bbU}{\mathbb{U}}
\newcommand{\frg}{\mathfrak{g}}
\newcommand{\calC}{\mathcal{C}}
\newcommand{\calF}{\mathcal{F}}
\newcommand{\calG}{\mathcal{G}}
\newcommand{\calL}{\mathcal{L}}
\newcommand{\calO}{\mathcal{O}}
\newcommand{\calS}{\mathcal{S}}
\newcommand{\calT}{\mathcal{T}}
\DeclareMathOperator{\real}{\mathfrak{Re}}
\DeclareMathOperator{\tr}{tr}
\DeclareMathOperator{\rtr}{\real \tr}
\DeclareMathOperator{\ad}{\mathrm{ad}}
\DeclareMathOperator{\Ad}{\mathrm{Ad}}
\DeclareMathOperator{\SU}{\bbS\bbU}
\newcommand{\cleq}{\preceq}
\newcommand{\subsimp}{\triangleleft}
\newcommand{\myone}{\mathbf{1}}
\newcommand{\myzero}{\mathbf{0}}
\newcommand{\ts}{\textstyle}
\newcommand{\rmins}[1]{\quad \textrm{#1} \quad}
\newcommand{\bs}{{\scriptscriptstyle \bullet}}
\newcommand{\Gact}{}          
\newcommand{\gact}{}          
\newcommand{\mapping}[4]{
\left\{
\begin{array}{rcl}
\displaystyle #1  &\to& #2 ,\\
\displaystyle #3  &\mapsto      & #4.
\end{array} \right.
}
\newtheorem{lemma}{Lemma}
\newtheorem{proposition}{Proposition}
\newtheorem{theorem}{Theorem}
\theoremstyle{definition}\newtheorem{definition}{Definition}
\theoremstyle{remark}\newtheorem{remark}{Remark}
\begin{document}
\title{A simplicial gauge theory}
\author{Snorre H. Christiansen, Tore G. Halvorsen}
\date{}
\maketitle

\begin{abstract}
We provide an action for gauge theories discretized on simplicial meshes, inspired by finite element methods. The action is discretely gauge invariant and we give a proof of consistency. A discrete Noether's theorem that can be applied to our setting, is also proved. 
\end{abstract}

\section{Introduction}

The Standard Model describes fundamental particles and interactions, except gravity, as a quantum field theory with gauge group $\bbU(1) \times \SU(2) \times \SU(3)$, see e.g. \cite{Wei05I}\cite{Wei05II}. Lattice Gauge Theory (LGT) \cite{Wil74}\cite{Kog83}\cite{Rot05}  is a computational approach with good agreement with experimental data. It has proved particularly useful for the $\SU(3)$ sector, concerning quarks and gluons, where perturbative methods fail.

In LGT, the underlying space-time is discretized by a cubical lattice. Scalar fields are assigned degrees of freedom on vertexes. Gauge fields are assigned degrees of freedom on edges, representing parallel transport between vertexes (Wilson lines). Curvature is recovered from holonomies around squares (plaquettes) in the lattice (Wilson loops). These quantities are combined to form a discrete action (Wilson action) that is gauge invariant, with respect to gauge transformations at vertexes. The construction of a discrete gauge invariant action is a fundamental ingredient in LGT, and is the only part of LGT we are concerned with in this paper.

There is interest in constructing discrete gauge invariant actions on other underlying geometries, in particular simplicial decompositions of space-time. This has already received considerable attention\footnote{We were blissfully  ignorant about these, when submitting the first draft.}, starting with \cite{ChrFriLee82}\cite{DroMor83}. For developments motivated by non-commutative differential geometry, see \cite{DimMul94}. The approach we propose in this paper results in an alternative prescription and is inspired by finite element methods, as they have been developed for Maxwell's equations, corresponding to the $\bbU(1)$ sector.

The most successful finite elements for Maxwell's equations are those introduced in \cite{Ned80}, generalizing \cite{RavTho77}. As remarked in \cite{Bos88}, lowest order Raviart-Thomas-N\'ed\'elec elements, correspond to Whitney forms \cite{Wei52}\cite{Whi57}. Based on this connection, a finite element exterior calculus has been developed \cite{Hip99}\cite{Hip02}\cite{Chr07NM}\cite{ArnFalWin06}. We refer to \cite{ArnFalWin10} for a recent review, relating the analysis developed with engineering problems in mind, to discrete Hodge theory, as in \cite{DodPat76}.

Here, we tap into this large body of work, continuing our investigations of Lie algebra valued Whitney forms \cite{ChrWin06} on the one side and LGT \cite{ChrHal11IMA}\cite{ChrHal09BIT} on the other. The point of view we develop, uniting these strands, is that LGT should define an action on the space of Lie algebra valued Whitney forms, which is close to the restriction of the Yang-Mills action, yet invariant under some discrete gauge transformations. 

The Yang-Mills action is of course gauge invariant, but the space of Lie algebra valued Whitney forms is not, resulting in a violation of local charge conservation. The analogue for electromagnetics would be that the discretization would violate electric charge conservation.  A discrete gauge invariance on the other hand, should give local charge conservation by discrete Noether's theorems, mimicking the continuum theory. A key ingredient in any convergence analysis is to estimate the error between the restriction of the Yang-Mills action and the new one. In the context of finite element methods, such \emph{consistency} errors are analysed in the framework of variational crimes \cite{Str72AP}, see \cite{Cia91} \S 26 -- 29.

For gauge group $\bbU(1)$, a consistent and gauge invariant action on simplicial meshes can be obtained through a relatively simple construction and some applications can be completely analyzed \cite{ChrHal11SINUM}. A key ingredient is a notion of mass lumping \cite{HauLac93}.  Because of limitations in the scope of mass lumping, we don't expect this method to extend to the full Yang-Mills action.

By a more elaborate procedure, we define here, a consistent and gauge invariant discrete action for Yang-Mills theories on simplicial meshes. The simplexes are not congruent, so the metric must enter the formulas in a non trivial way, contrary to the cubical lattices of standard LGT. The metric on a simplex defines a mass matrix for the Whitney two-forms. Wilson loops associated with the faces of the simplexes are used to represent the curvature covariantly. Whereas standard LGT sums individual contributions from faces, we sum over simplexes, in which cross terms between the different faces appear. These couplings between Wilson loops are weighted by the mass matrix coefficients and made gauge invariant by discrete parallel transport between origins.

One advantage of the proposed  formulas is to allow local mesh refinements, useful for instance to efficiently represent singular fields. The consistency proof we provide covers such meshes, indicating robustness with respect to mesh geometry. Another advantage is to accommodate variable metrics as defined by Regge calculus \cite{Reg61}, see Remark \ref{rem:reg}.

This paper serves to introduce the formalism, give the definition, prove consistency and propose a discrete Noether's theorem. Preliminary applications to quantum field theory, including numerical results, have been reported elsewhere \cite{HalMac11}, while this paper was under review. It is organized as follows. Section \ref{sec:pre} contains definitions pertaining to connections and curvature, as well as Whitney forms. Section \ref{sec:def} includes the definition of the proposed discrete Yang-Mills Lagrangian and some comments. Section \ref{sec:con} contains the proof of consistency in a finite element sense. Finally section \ref{sec:noe} contains the discrete Noether's theorem we introduce.

\section{\label{sec:pre} Prerequisites}

\paragraph{Yang-Mills action} A standard reference for connections and curvature is \cite{KobNom63}. Here, we use notations as in \cite{ChrWin06}, which also contains a more comprehensive presentation of Lie algebra valued differential forms. The main ingredients are as follows.

Choose a compact Lie group $\bbG$ with associated Lie algebra $\frg$. For simplicity we suppose that $\bbG$ is a subgroup of the complex unitary $n \times n$ matrices, for some $n$. Typically an element of $\bbG$ will be denoted $G$ and an element of $\frg$ will be denoted $g$. The Hermitian conjugate of a matrix $g$ is denoted $g^\sch$ and the real scalar product is:
\begin{equation}\label{eq:scalp}
g \cdot g' = \rtr(g^\sch g').
\end{equation} 
When no confusion is possible with scalars, the unit matrix is denoted $\myone$ and the zero matrix is denoted $\myzero$.

Let $S$ be a bounded domain in $m$-dimensional Euclidean space (in applications $m=2,3,4$ would be the most common). The space of smooth $k$-forms on $S$ is denoted $\Omega^k(S)$.  The space $\Omega^k(S) \otimes \frg$ can be identified with the space of smooth $\frg$-valued $k$-forms on $S$. The bracket of Lie algebra valued forms is determined by:
\begin{equation}
[u \otimes g, u' \otimes g' ] =  (u \wedge u') \otimes [g,g'],
\end{equation}
where $u,u'$ are real valued differential forms and $g,g'$ are elements of $\frg$. 

A smooth connection one-form on $S$ is an element $A \in \Omega^1(S) \otimes \frg$ also called gauge field. Its curvature is $\calF(A) \in \Omega^2(S) \otimes \frg$ defined by:
\begin{equation}
\calF(A)= \rmd A + 1/2 [A,A].
\end{equation}
We will use such forms with less regularity, typically in some Sobolev space.

Gauge transformations of connection one-forms are associated with functions $Q: S \to \bbG$, and defined by:
\begin{equation}
\calG_Q(A)= QAQ^{-1} - (\rmD Q) Q^{-1}.
\end{equation}
One has:
\begin{equation}
\calF(\calG_Q(A)) = Q \calF(A) Q^{-1}.
\end{equation}

The Yang-Mills action is given by:
\begin{equation}\label{eq:ymaction}
\calS(A)=\int_S |\calF(A)|^2.
\end{equation}
Since the adjoint representation is unitary, this action is invariant under gauge transformations.

\paragraph{Whitney forms} We refer to \cite{Hip02}\cite{ArnFalWin06} for surveys on Whitney forms \cite{Wei52}\cite{Whi57} in a finite element guise. For aspects relating to differential geometry and algebraic topology, one can consult \cite{Pra07}. The following is a summary of results needed and serves mainly to introduce notations.

Let $\calT$ be a simplicial complex, spanning the domain $S$. The set of $k$-dimensional simplexes in $\calT$ is denoted $\calT^k$. Simplexes of dimension $0$, $1$ and $2$ are referred to as vertexes, edges, faces respectively. Generic labels for edges and faces will be $e$ and $f$ respectively. The symbol $T$ can be used for simplexes of any dimension. In the presence of several vertexes we denote them by $i,j,k,l$. We suppose an orientation has been chosen for each simplex in $\calT$. 

Let $W^k(\calT)$ be the space of Whitney $k$-forms on $\calT$. We also denote by $W^k(T)$ the space of Whitney $k$-forms on a simplex $T$. The canonical basis of $W^k(\calT)$ is denoted $(\lambda_{T})$, $T$ ranging over the set $\calT^k$ of
$k$-dimensional simplexes in $\calT$.  Explicitly, the $0$-forms are spanned by the barycentric coordinate maps. For any vertex $i \in \calT^0$, $\lambda_i$ is the piecewise affine map taking the value $1$ at vertex $i$ and $0$ at the other vertexes. For a  $k \geq 1$ and a $k$-dimensional simplex $T \in \calT$ with vertexes $i_0, \ldots, i_k$, ordered according to the chosen orientation of $T$, we have:
\begin{equation}
\lambda_{i_k \cdots i_0} = \lambda_T = k! \sum_{j = 0}^k (-1)^j \lambda_{i_j} \rmd \lambda_{i_0} \wedge \ldots  \widehat{\rmd \lambda_{i_j}} \ldots \wedge \rmd \lambda_{i_k}. 
\end{equation}
The hat signifies omission of this term. We will only use $0$-, $1$- and $2$-forms.  When $i,j$ are vertexes of an edge, the associated Whitney $1$-form is:
\begin{equation}
\lambda_{ji} = \lambda_i \rmd \lambda_j - \lambda_j \rmd \lambda_i.
\end{equation}
When $i,j,k$ are vertexes of a face, the associated Whitney $2$-form is:
\begin{equation}\label{eq:twobasis}
\lambda_{kji} = 2(\lambda_i \rmd \lambda_j \wedge \rmd \lambda_k - \lambda_j \rmd \lambda_i \wedge \rmd \lambda_k + \lambda_k \rmd \lambda_i \wedge \rmd \lambda_j).
\end{equation}

The space of (real) $k$-cochains consists of the functions that assign a real number to each $k$-simplex:
\begin{equation}
\calC^k(\calT)= \bbR^{\calT^k}.
\end{equation}
The coboundary operator $\delta : \calC^k(\calT) \to \calC^{k+1}(\calT)$ is defined as follows. The relative orientation $o(T,T')$ between a simplex $T \in \calT^{k+1}$ and a simplex $T' \in \calT^k$ is $0$ if $T'$ is not in the boundary of $T$, and $\pm 1$ when $T'$ is in the boundary of $T$, the sign depending on whether it is outward oriented or not. For $u \in \calC^k(\calT)$, $ \delta u \in \calC^{k+1}(\calT)$ is defined on any $T \in \calT^{k+1}$ by:  
\begin{equation}
(\delta u)_T = \sum_{T' \in \calT^k} o(T, T') u_{T'}.
\end{equation}
One has $\delta \delta = 0$. The de Rham map $R^k$ is defined by:
\begin{equation}
R^k : \mapping{\Omega^k(S)}{\calC^k(\calT)}{u}{(\int_T u )_{T \in \calT^k}} 
\end{equation}
In this formula the $k$-form $u$ is (pulled back and) integrated on the $k$-simplexes $T$, taking into account orientations. By Stokes' theorem we have commuting diagrams:
\begin{equation}\label{eq:drcom}
\xymatrix{
\Omega^k(S)\ar[r]^{\rmd}\ar[d]^{R^k} & \Omega^{k+1}(S) \ar[d]^{R^{k+1}}\\
\calC^k(\calT) \ar[r]^{\delta} & \calC^{k+1}(\calT)
}
\end{equation}
Since for $T,T' \in \calT^k$ one has:
\begin{equation}
\int_{T'} \lambda_T = \delta_{TT'} \rmins{(Kronecker delta),}
\end{equation}
the de Rham map induces isomorphisms:
\begin{equation}\label{eq:dri}
R^k : W^k(\calT) \to \calC^k(\calT), 
\end{equation}
whose inverses are:
\begin{equation}
\mapping{\calC^k(\calT)}{W^k(\calT)}{u}{\sum_{T \in \calT^k} u_T \lambda_T}.
\end{equation}
Given $u \in W^k(\calT)$ we denote by $u_\bs = R^k u$ its associated cochain. Another useful notation is the following. If $u \in \calC^k(\calT)$, one defines for any simplex $T\in \calT$ with vertexes $i_0,  \ldots, i_k$ :
\begin{equation}
u_{i_k \cdots i_0} = \pm u_T,
\end{equation}
the sign depending on whether the ordering of the vertexes agrees with the orientation of $T$ or not. For instance  we can write if $u \in \calC^2(\calT)$:
\begin{equation}
(\delta u)_{kji} = u_{ik} + u_{kj} + u_{ji}.
\end{equation}

Let $I^k$ denote the interpolation operator onto Whitney $k$-forms -- it is the projection onto $W^k(\calT)$ determined by the identity $R^kI^k = R^k$. Equivalently, for $u \in \Omega^k(S)$ one has:
\begin{equation}\label{eq:interpol}
I^ku = \sum_{T \in \calT^k} (\ts \int_{T} u) \lambda_{T} \in W^k(\calT).
\end{equation}
Interpolation commutes with the exterior derivative. 

Whitney forms are not smooth, but have enough regularity for the exterior derivative, in the sense of distributions/currents of Schwartz and de Rham, to be given by the simplex-wise definition (there are no Dirac measures on interfaces).

\section{\label{sec:def} Definition}

We first make some remarks on Lie algebra valued Whitney forms and their associated cochains. The de Rham isomorhisms (\ref{eq:dri}) give isomorphisms:
\begin{equation}
R^k : W^k(\calT)\otimes \frg \to \calC^k(\calT)\otimes \frg, 
\end{equation}
An assignment of a Lie algebra element to each $k$-simplex will be called a Lie algebra $k$-cochain. For $k= 1$ this goes as follows. Pick $A \in W^1(T)\otimes \frg$. Attached to an edge with vertexes
$i,j$ and oriented from $i$ to $j$, one has an element $A_{ji}\in
\frg$ obtained by writing:
\begin{equation}
A = \sum_{ji \in \calT^1} A_{ji} \lambda_{ji} \rmins{where} A_{ji} = \int_{ji} A,
\end{equation}
Thus $ji$ denotes an edge in $\calT^1$, oriented from the vertex $i$ to the vertex $j$. With these notations, the cochain associated with $A$ is:
\begin{equation}
A_\bs = (A_{ji})_{ji \in \calT^1} \in \calC^1(\calT)\otimes \frg.
 \end{equation}

The pullback of the $1$-form $A \in W^1(\calT) \otimes \frg$ to the edge $ji$ is a constant $1$-form. Therefore parallel transport from $i$ to $j$ is given simply by:
\begin{equation}\label{eq:ptua}
U_{ji}=\exp(-A_{ji}).
\end{equation}
We suppose $U_{ji}$ to be close enough to $\myone$ for the logarithm to be unambiguous. Then one is
free to think in terms of Lie group elements $U_{ji}$ (close to $\myone$) or Lie algebra elements $A_{ji}$ (close to $\myzero$). We use the conventions:
\begin{equation}\label{eq:usign}
U_{ij}^{\phantom{1}}=U_{ji}^{-1} \rmins{ and } U_{ii}= \myone,
\end{equation}
which correspond to:
\begin{equation}
A_{ij}=-A_{ji}\rmins{ and } A_{ii} = \myzero.
\end{equation}

A discrete gauge transformation is associated with a choice of Lie group elements
$G_i \in \bbG$, one for each vertex $i \in \calT^0$. One then transforms $A\in W^1(\calT) \otimes \frg$ by operating on its parallel transports. Namely, if $U_{ji}$ are the parallel transports of $A$, the parallel transports of its image $A'\in W^1(\calT) \otimes \frg$ will be $U'_{ji}$ defined by: 
\begin{equation}\label{eq:gt}
U'_{ji}=  G_j U_{ji} G_i^{-1}.
\end{equation}
With this choice of gauge transformations, we will construct
a gauge invariant approximation of the ``true'' Yang-Mills action on simplexes $T \in \calT^m$ of maximal dimension, which we recall to be defined by:
\begin{equation}\label{eq:action}
\calS_T(A)=\int_T |\calF(A)|^2.
\end{equation}
In our setting $T$ inherits the Euclidean metric of the ambient space, but as already indicated one could use a Regge metric instead. The metric enables integration of scalar functions on
$T$. It also gives, at each point $x$ of $T$, a scalar product on alternating forms above $x$. The associated norm was denoted $| \cdot |$ in (\ref{eq:action}).

Let $M$ be the matrix of the $\rmL^2(T)$ product on $W^2(T)$, equipped with the standard
basis defined by (\ref{eq:twobasis}). This matrix is indexed by the two-dimensional faces of $T$ (and depends on their orientations). Explicitly, for two faces $f_0$ and $f_1$  in a simplex $T$, we put:
\begin{equation}\label{eq:massmat}
M_{f_0f_1}(T) = \int_T \lambda_{f_0} \cdot \lambda_{f_1}.
\end{equation}
where the scalar product of alternating forms is denoted $(\cdot)$. Notice that this matrix is not diagonal, which can be interpreted as an interaction between neighboring faces.

Given parallel transports $U \in \calC^1(\calT, \bbG)$, the discrete curvature associated with a face $f \in \calT^2$ with vertexes $i,j,k$ is defined in analogy with square Wilson loops \cite{Wil74} by:
\begin{equation}\label{eq:holdef}
F_{kji}=U_{ik}U_{kj}U_{ji}.
\end{equation}
In other words, one considers the holonomy around the boundary of the face $f$ of the $1$-form $A \in W^1(\calT) \otimes \frg$ related to $U$ by (\ref{eq:ptua}). This formula depends on the ordering of the vertexes and locates the curvature at the vertex $i$. The curvature at vertex $j$ is obtained by permuting indices and satisfies:
\begin{equation}
F_{ikj}=U_{ji}F_{kji}U_{ij}.
\end{equation}
This gives a formula for parallel transport of curvature from $i$ to $j$. Concerning orientation of a given face, we also notice that the definition (\ref{eq:holdef}) implies:
\begin{equation}
F_{jki}=F_{kji}^{-1}.
\end{equation}
Under gauge transformations this curvature behaves as follows. If $F \in \calC^2(\calT, \bbG)$ is the curvature associated with holonomies $U\in \calC^1(\calT, \bbG)$ by (\ref{eq:holdef}), and $U$ is transformed by $G \in \calC^0(\calT, \bbG)$ to $U'$, according to (\ref{eq:gt}), then the curvatures $F'$ of $U'$ are:
\begin{equation}\label{eq:fgt}
F_{kji}' = G_i F_{kji} G_i^{-1}.
\end{equation}
When $f$ is a face with vertexes $i,j,k$, which is ordered as $i \to j \to k$ and we choose to locate the curvature at $i$, we put:
\begin{equation}
F_{f} = F_{kji}.
\end{equation}
This formula defines the curvature of a \emph{pointed oriented face} $f$. For a pointed face $f$, its distinguished point is denoted $\dot{f}$ and called its origin.

We now propose the following definition of a discrete action for lattice gauge theory on simplexes, as an alternative to (\ref{eq:action}):
\begin{definition} We define:
\begin{align}\label{eq:sgtdef}
\calS'_T(A) & = \sum_{f_0 f_1} M_{f_0 f_1}(T) \rtr \big(U_{\dot{f}_1\dot{f}_0}(\myone-F_{f_0}^\sch) U_{\dot{f}_0\dot{f}_1} (\myone-F_{f_1})\big).
\end{align}
In this formula we sum over  pairs of faces $f_0, f_1$ of $T$, each one having an orientation and a distinguished point. We have incorporated the parallel transport determined by $A$, between the distinguished points $\dot{f}_0$ and $\dot{f}_1$. 
\end{definition}

We can state:
\begin{theorem}
The action $\calS'_T$ is  discretely gauge invariant, with respect to transformations of the form (\ref{eq:gt}).
\end{theorem}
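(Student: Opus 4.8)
The plan is to show that every factor appearing in the summand of \eqref{eq:sgtdef} transforms covariantly under \eqref{eq:gt}, so that the conjugations telescope and cancel, leaving the real trace invariant. Since the mass matrix coefficients $M_{f_0f_1}(T)$ depend only on the metric and the orientations of the faces, not on $A$, they are untouched by a gauge transformation; the whole of the gauge dependence sits inside the real trace.

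First I would recall from \eqref{eq:fgt} that if $U$ is transformed to $U'$ by $G\in\calC^0(\calT,\bbG)$, then the curvature at a vertex $i$ transforms as $F'_{kji}=G_iF_{kji}G_i^{-1}$; in particular $F_{f}$ (curvature of the pointed face $f$ located at $\dot f$) becomes $G_{\dot f}F_{f}G_{\dot f}^{-1}$, and therefore $\myone-F_{f}\mapsto G_{\dot f}(\myone-F_{f})G_{\dot f}^{-1}$. Taking Hermitian conjugates and using that $G_i\in\bbG$ is unitary, $\myone-F_{f}^\sch\mapsto G_{\dot f}(\myone-F_{f}^\sch)G_{\dot f}^{-1}$ as well. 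Next, the parallel transports between origins transform by \eqref{eq:gt}: $U_{\dot f_1\dot f_0}\mapsto G_{\dot f_1}U_{\dot f_1\dot f_0}G_{\dot f_0}^{-1}$ and $U_{\dot f_0\dot f_1}\mapsto G_{\dot f_0}U_{\dot f_0\dot f_1}G_{\dot f_1}^{-1}$.

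Then I would simply substitute these four transformation rules into the argument of $\rtr$ in \eqref{eq:sgtdef} and observe the cancellation: reading the product cyclically, the $G_{\dot f_0}^{-1}$ from $U'_{\dot f_1\dot f_0}$ meets the $G_{\dot f_0}$ from $(\myone-F_{f_0}^\sch)'$, the next $G_{\dot f_0}^{-1}$ meets the $G_{\dot f_0}$ from $U'_{\dot f_0\dot f_1}$, the $G_{\dot f_1}^{-1}$ there meets the $G_{\dot f_1}$ from $(\myone-F_{f_1})'$, and the last $G_{\dot f_1}^{-1}$ wraps around to the leading $G_{\dot f_1}$ by cyclicity of the trace. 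Hence the argument of the trace is conjugated by $G_{\dot f_1}$, and since $\rtr$ is conjugation-invariant (indeed $\tr$ is cyclic and $G_{\dot f_1}$ is invertible), each summand — and thus $\calS'_T$ — is unchanged.

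I do not expect a genuine obstacle here; the only points requiring a little care are bookkeeping ones. One must make sure the Hermitian conjugate $F_{f_0}^\sch$ really does transform with the \emph{same} group element $G_{\dot f_0}$ at both ends (this uses unitarity of $\bbG\subset \bbU(n)$, so that $(G_{\dot f_0}^{-1})^\sch=G_{\dot f_0}$), and one must track the origins correctly: the parallel-transport factors connect $\dot f_0$ to $\dot f_1$, which are exactly the vertices at which the two curvature factors are located, so the indices match up as claimed. Finally one should note that $\rtr=\real\circ\tr$ inherits cyclicity from $\tr$, so conjugation invariance of the complex trace suffices.
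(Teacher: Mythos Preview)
Your argument is correct and is exactly the verification the paper has in mind: the theorem is stated without proof because gauge invariance is designed into \eqref{eq:sgtdef}, and your substitution of \eqref{eq:gt} and \eqref{eq:fgt} followed by cyclicity of the trace is the intended one-line check. Your care with the Hermitian conjugate (using unitarity of $\bbG$ so that $(G_{\dot f_0}^{-1})^\sch=G_{\dot f_0}$) is the only point where anything could go wrong, and you handled it correctly.
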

Global actions are obtained by summing the contributions of each maximal simplex in $\calT^m$, so that for $A \in W^1(\calT) \otimes \frg$:
\begin{equation}
\calS'(A) = \sum_{T\in \calT^m} \calS'_T(A).
\end{equation}

\begin{remark} Define, for any $U\in \calC^1(\calT, \bbG)$, subject to (\ref{eq:usign}) and any $F \in \calC^2(\calT, \bbG)$ (not necessarily the curvature of $U$!):
\begin{equation}
 \calL_T(U,F)  = \sum_{f_0 f_1} M_{f_0 f_1}(T) \rtr \big(U_{\dot{f}_1\dot{f}_0}(1-F_{f_0}^\sch) U_{\dot{f}_0\dot{f}_1} (1-F_{f_1})\big).
\end{equation}
For fixed $U$,  $\calL_T(U,F)$ can be interpreted as an $\rmL^2(T)$ norm squared of $F$. This norm depends on $U$, which contrasts with the fact that the usual  $\rmL^2(T)$ norm, as it appears in (\ref{eq:action}) is independent of the gauge.

However  $\calL_T(U,F)$ is invariant under transformations $(U,F) \mapsto (U',F')$, associated with some $G \in \calC^0(\calT, \bbG)$ by (\ref{eq:gt}) and (\ref{eq:fgt}).
\end{remark}

\begin{remark} It is most natural to compute a norm in the Lie algebra. Thus in the definition of the discrete actions, the terms of the form $(F_f -\myone)$ should be considered as approximations of $\log F_f$ that are more readily computable. For a general Lie group one could use:
\begin{equation}
\calS'_T(A) = \sum_{f_0 f_1} M_{f_0 f_1}(T) \Ad(U_{\dot{f}_1\dot{f}_0})\log(F_{f_0}) \cdot \log(F_{f_1}).
\end{equation}
Here $\Ad: \bbG \to \mathrm{End}(\frg)$ is the adjoint representation. Recall that for a given $U \in \bbG$, $\Ad(U): \frg \to \frg$ is the tangent map at unity, of the automorphism of $\bbG$ mapping an element $G$ to $U G U^{-1}$. The scalar product on $\frg$, denoted here with $(\cdot)$, should make the adjoint representation unitary.
\end{remark}

\begin{remark}\label{rem:reg}
The proposed method can be combined seamlessly with Regge calculus \cite{Reg61} (see \cite{Chr04M3AS}\cite{Chr11NM} for finite element interpretations). In Regge calculus the metric of a given simplex is determined by the edge lengths and yields a mass matrix, as in the adopted setting. It seems that only minor modifications are necessary for the consistency proof we will provide, to cover the case where the local metrics are Regge metrics interpolating a smooth one. Notice that this action uses just edge lengths  for the metric and values of the fields at vertexes or edges, never vertex coordinates, so that the method is "coordinate free".
 \end{remark}

\paragraph{Scalar fields.} We include a definition of a discrete action for certain so-called scalar fields. We will not prove consistency for it here.

Let $V$ be an inner product space on which $\bbG$ acts unitarily. The action is denoted simply $(G,v) \mapsto G \Gact v$. Likewise the associated action of $\frg$ on $V$ is denoted $(g,v) \mapsto g \gact v$.

Let $\nabla$ denote the canonical flat connection acting on sections $\Phi: S \to V$. Given $A$, the action to approximate is: 
\begin{equation}
\calS_T(A,\Phi) = \int_T | \nabla \Phi + A \gact \Phi|^2.
\end{equation}

Let then $\Phi \in W^0(\calT) \otimes V$ be a discrete scalar field. We can write:
\begin{equation}
\Phi = \sum_{i \in \calT^0} \Phi_i \lambda_i \rmins{with} \Phi_i = \Phi(i).
\end{equation}
Concerning the cochains associated with $\Phi$ and $\nabla \Phi \in W^1(\calT) \otimes V$ we have, by (\ref{eq:drcom}):
\begin{equation}
\nabla \Phi = \sum_{ji \in \calT^1} (\Phi_j - \Phi_i) \lambda_{ji}. 
\end{equation}
The mass matrix for Whitney $1$-forms is also denoted by $M$ and is indexed by oriented edges. Thus:
\begin{equation}
M_{e_0 e_1}(T) = \int_T \lambda_{e_0} \cdot \lambda_{e_1}.
\end{equation}
For an oriented edge $e$ we denote its origin by $\dot{e}$ and its target by $\ddot{e}$. 

As a discrete action we propose to use:
\begin{equation}
\calS'_T(A,\Phi) = \sum_{e_0 e_1} M_{e_0e_1}(T) U_{\ddot{e}_1\ddot{e}_0} \Gact (\Phi_{\ddot{e}_0} - U_{{e}_0}\Gact \Phi_{\dot{e}_0}) \cdot (\Phi_{\ddot{e}_1} - U_{{e}_1}\Gact\Phi_{\dot{e}_1}\big),
\end{equation}
where the scalar product is that of $V$. 

Recall that under discrete gauge transformations associated with $G_i \in \bbG$, the parallel transports $U$ transform by (\ref{eq:gt}). The corresponding transformation of $\Phi$ is, at the level of cochains:
\begin{equation}
\Phi_i \mapsto G_i \Gact \Phi_i.
\end{equation}
It is readily checked that $\calS'(A,\Phi)$ is discretely gauge invariant.

\paragraph{Conventional LGT} For comparison, we recall the usual definition of LGT on cubical meshes. One attaches a discrete parallel transport $U_{ji}$ to any two vertexes $i,j$ of the grid linked by an edge, with the preceding constraint $U_{ji} = U_{ij}^{-1}$. A face $f$ of this mesh is then a square with four vertexes, called plaquette. Given a choice of orientation and origin, these four vertexes can be labelled $f_0,f_1,f_2,f_3$. Barring the coupling constant, the action is then defined by a sum over faces:
\begin{equation}\label{eq:standardlgt}
\sum_{f} \rtr (\myone- U_{f_0f_3} U_{f_3f_2} U_{f_2f_1} U_{f_1f_0}),
\end{equation}
Remark that the action is independent of the choice of origin and orientation of the faces. 

Thus in standard LGT, one sums over faces, and the contribution of each face is discretely gauge invariant, under transformations (\ref{eq:gt}). On the other hand, for simplicial meshes, we propose to sum over maximal simplexes (tetrahedrons in dimension 3), with gauge invariant cross terms between neighboring faces. The counterpart for cubical meshes would be to sum over cubes, inside which plaquettes are coupled two by two. From this point of view, standard LGT uses just diagonal terms. This is also why the scalar product (\ref{eq:scalp}) is more apparent in (\ref{eq:sgtdef}) than in (\ref{eq:standardlgt}). One can compare with the fact that the Yee scheme \cite{Yee66} can be deduced from a finite element scheme via mass lumping \cite{Mon93} (whereby mass matrices are approximated by diagonal matrices in a consistent way).  

That the continuum limit of the LGT action is the Yang Mills action, is usually argued on the basis of Taylor expansions and a couple of terms in the BCH formula, e.g. \cite{Kog83} p. 786.  To a numerical analyst these arguments would prove consistency in the finite difference sense.  Consistency in the finite element sense is related of course, but not identical, putting emphasis on the choice of function theoretic norms, typically Sobolev norms. Transposing the finite element arguments we will give here for the simplicial case, to the cubical case (with tensor-product Whitney forms) would give a novel proof on the convergence of the Wilson action to the Yang-Mills action, in the continuum limit. We don't expect  consistency proofs based on Taylor expansion techniques to carry over directly to the simplicial setting, since it is difficult for them to take into account mesh geometry.

\section{\label{sec:con} Consistency}

In this section we want to study the error committed, when approximating (\ref{eq:action}) by (\ref{eq:sgtdef}). For this purpose we introduce two more discrete actions defined for $A \in W^1(T) \otimes \frg$, for simplexes $T \in \calT^m$ . These discrete actions serve only to provide intermediate steps between $\calS_T(A)$ and $\calS'_T(A)$, aiming at clarifying the analysis. 

First we define:
\begin{align}
\calS^1_T(A) & = \int_T | I^2 \calF(A)|^2,
\end{align}
and remark that, by (\ref{eq:scalp}), (\ref{eq:interpol}) and (\ref{eq:massmat}):
\begin{align}
  \calS^1_T(A) & = \sum_{f_0 f_1} M_{f_0 f_1}(T) \rtr \big(\ts\int_{f_0} \calF(A)^\sch \ts\int_{f_1} \calF(A)\big).
\end{align}
The sum extends over pairs of faces $f_0, f_1$ of $T$. Second, given also a choice of origins of the faces, we define:
\begin{align}
\calS^2_T(A) & = \sum_{f_0 f_1} M_{f_0 f_1}(T) \rtr \big((\myone- F_{f_0})^\sch (\myone-F_{f_1})\big).
\end{align}

We now evaluate the error in each of the approximations:
\begin{equation}
\calS_T \to \calS^1_T\to \calS^2_T \to \calS'_T.
\end{equation}
The reasoning is partly similar to \cite{ChrHal11SINUM}, but we need to extend to non-commutative gauge group and abandon the convenience of mass-lumping. For definiteness we restrict attention to dimension $m=3$ for the ambient space $S$. Maximal simplexes are then tetrahedrons.

We suppose that we have a regular sequence of simplicial meshes $\calT_n$ of the domain $S$. The diameter of a simplex $T$ is denoted $h_T$, and  the biggest $h_T$ when $T$ is in $\calT_n$ is denoted $h_n$. We suppose that the sequence $(h_n)_{n \in \bbN}$ converges to $0$. Let $I^k_n$ denote the interpolant onto Whitney $k$-forms associated with the mesh $\calT_n$. Let $X_n$ denote the space  $W^1(\calT_n) \otimes \frg$. For ease of notation put also $I_n = I^1_n$ and $J_n= I^2_n$.

The following definition is the natural extension of \cite{Cia91} \S 28 to a non-linear setting.
\begin{definition}
Let $\| \cdot\|$ denote the norm of some Banach space in which $\Omega^1(S) \otimes \frg$ is dense, and containing each $X_n$. We say that two actions $\calS_n$ and  $\calS'_n$ defined on $X_n$ are consistent with each other, with respect to  $\| \cdot \|$, if for all smooth $A$ we have:
\begin{equation}
\sup_{A' \in X_n} |\rmD \calS_n(I_n A) A' - \rmD \calS'_n(I_nA)A'| / \| A'\| \to 0 \rmins{when} n \to \infty.
\end{equation}
More precisely if the above expression is $\calO(\epsilon_n)$, for some sequence $\epsilon = (\epsilon_n) \to 0$, we speak of consistency of order $\epsilon$.
\end{definition}

If there is a constant $C>0$ (which may depend on $A$ and the sequence $(\calT_n)$ but not on $n$) such that quantities $a_n$ and $b_n$ satisfy $a_n \leq C b_n$ for all $n$, we write $a_n \cleq b_n$ or $a_n = \calO(b_n)$.

Consider a simplex $T$ of dimension $d$ and let $\Phi : \hat T \to T$ be a scaling map of the form $\Phi(x) = h_T x + y$. We have, for any  $u \in \Omega^k(T)$:
\begin{equation}\label{eq:scaling}
\| u \|_{\rmL^p(T)} = h_T^{-k + d/p} \| \Phi^\star u \|_{\rmL^p(\hat T)}.
\end{equation}
Arguments based on  this identity will be referred to as \emph{scaling} arguments. 

For instance if we have a sequence of elements $u^n \in X_n$ which is bounded in $\rmL^4(S)$ and $e_n$ is an edge in $\calT_n$ we deduce by scaling that we have bounds:
\begin{equation}
|\int_{e_n} u^n| \cleq h_{e_n}^{1/4}.
\end{equation}
which gives:
\begin{equation}
|u^n_\bs|_{\ell^\infty} = \max_{e \in \calT_n^1} |u^n_e| \cleq h_n^{1/4}.
\end{equation} 
This is enough to guarantee that the logarithm is unambiguous as required initially.

On cochains, we  consider norms of the following form (with no coefficients):
\begin{equation}
\rmins{for } u \in \calC^k(\calT) \quad |u|^2 = \sum_{T \in \calT^k} |u_T|^2.
\end{equation}
We will frequently use that on reference simplexes $\hat T$, the cochain norm is equivalent to functional norms, as they appear in the right hand side of (\ref{eq:scaling}).

We let $A \in \Omega^1(S) \otimes \frg$ be smooth. We put $A^n = I_n A$. Remark that for edges $e$ in $\calT_n$ we have:
\begin{equation}
A^n_e= (I_nA)_e= \int_e A = \calO (h_e),
\end{equation}
and for faces $f$ in $\calT_n$ we have:
\begin{equation}
(\delta A^n_\bs)_f = (\rmd I_n A)_f=  \int_f \rmd A = \calO (h_f^2).
\end{equation}

\paragraph{Step one}
We compare $\calS$ and $\calS^1$. We first remark:
\begin{lemma} We have:
\begin{equation}
\|\calF(A^n) - J_n \calF(A^n)\|_{\rmL^2(T)} = \calO(h_T\|\calF(A^n)\|_{\rmL^2(T)}),
\end{equation}
and:
\begin{equation}
\|J_n \calF(A^n)\|_{\rmL^2(T)} \cleq \|\calF(A^n)\|_{\rmL^2(T)}.
\end{equation}
\end{lemma}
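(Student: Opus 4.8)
The plan is to prove both estimates by a standard scaling-and-equivalence-of-norms argument on the reference simplex, exploiting that $\calF(A^n)$ restricted to $T$ lies in a fixed finite-dimensional space. Recall that $A^n = I_nA \in W^1(\calT_n)\otimes\frg$, so on a maximal simplex $T$ the one-form $A^n$ is a $\frg$-valued Whitney one-form, hence $\rmd A^n$ is constant on $T$ and $[A^n,A^n]$ is a polynomial of low degree; in any case $\calF(A^n)|_T = \rmd A^n + \tfrac12[A^n,A^n]$ belongs to a space of $\frg$-valued two-forms on $T$ whose pullback under the scaling map $\Phi:\hat T\to T$ lands in a \emph{fixed} finite-dimensional space $\calP$ of polynomial $\frg$-valued two-forms on $\hat T$, independent of $n$ and $T$ (the dimension depends only on the ambient dimension and the degree coming from the quadratic term).

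For the second inequality, I would write $J_n\calF(A^n)$ as the Whitney-two-form interpolant of $\calF(A^n)$. On the reference simplex $\hat T$ the operator $\hat I^2$ is a fixed bounded linear map on, say, $\rmL^2(\hat T)\otimes\frg$ two-forms (it factors through the de Rham map, which is bounded on the finite-dimensional space $\calP$, composed with the fixed isomorphism $\calC^2(\hat T)\otimes\frg \to W^2(\hat T)\otimes\frg$). Since $I^2$ commutes with pullback by affine scalings, $\Phi^\star(J_n\calF(A^n)) = \hat I^2(\Phi^\star \calF(A^n))$, and $\Phi^\star\calF(A^n)\in\calP$. Applying the scaling identity \eqref{eq:scaling} with $p=2$, $k=2$, $d=3$ to both sides, the powers of $h_T$ cancel and we are left with $\|J_n\calF(A^n)\|_{\rmL^2(T)} = h_T^{-1/2}\|\hat I^2 \Phi^\star\calF(A^n)\|_{\rmL^2(\hat T)} \cleq h_T^{-1/2}\|\Phi^\star\calF(A^n)\|_{\rmL^2(\hat T)} = \|\calF(A^n)\|_{\rmL^2(T)}$, where the middle step uses boundedness of $\hat I^2$ on the fixed space $\calP$.

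For the first inequality, the key observation is that on $\hat T$ the operator $v\mapsto v - \hat I^2 v$ \emph{vanishes on constants}: a constant (indeed Whitney) two-form is reproduced by $\hat I^2$. Hence, on the finite-dimensional space $\calP$, the seminorm $v\mapsto \|v-\hat I^2 v\|_{\rmL^2(\hat T)}$ is controlled by the seminorm measuring the "non-constant part" of $v$, e.g.\ $\|v - \bar v\|_{\rmL^2(\hat T)}$ where $\bar v$ is the average, or equivalently by $\|\hat\rmd v\|_{\rmL^2(\hat T)}$ together with a Poincar\'e-type inequality on $\hat T$ — all of this is a finite-dimensional linear algebra statement, uniform over the finitely many reference simplex shapes. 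Pulling back, $\hat\rmd(\Phi^\star\calF(A^n)) = \Phi^\star(\rmd\calF(A^n)) = \Phi^\star(\rmd\rmd A^n + \tfrac12\rmd[A^n,A^n]) = \tfrac12\Phi^\star[\,\rmd A^n, A^n\,]$ up to sign, which carries an extra factor of $h_T$ relative to $\Phi^\star\calF(A^n)$ itself after accounting for the scaling of $A^n$. Collecting the $h_T$ powers from \eqref{eq:scaling} then yields the factor $h_T$ in front of $\|\calF(A^n)\|_{\rmL^2(T)}$.

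The main obstacle, and the point requiring care rather than difficulty, is making precise that everything reduces to a \emph{finite-dimensional, $n$-uniform} statement on reference simplices: one must check that $\Phi^\star\calF(A^n)$ really lies in a space $\calP$ that does not grow with $n$ (the quadratic term $[A^n,A^n]$ has bounded polynomial degree since $A^n$ is a Whitney one-form, so this is fine), that the regularity/shape-regularity of the mesh sequence $\calT_n$ makes the reference-simplex constants uniform, and that the Whitney interpolant $I^2$ genuinely commutes with the affine scaling $\Phi$ (it does, since $I^2$ is defined via integration over faces and $\Phi$ maps faces to faces compatibly with orientation). Once these bookkeeping points are settled, both estimates fall out of the scaling identity together with equivalence of all norms on a fixed finite-dimensional space.
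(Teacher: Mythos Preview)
Your overall strategy --- pull back to a reference simplex, use that $\Phi^\star\calF(A^n)$ lands in a fixed finite-dimensional polynomial space, invoke equivalence of norms there, and scale back --- is exactly the paper's one-line ``by scaling'' argument, and your treatment of the second inequality is correct.

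For the first inequality there is a real gap. You propose to control $\|v - \hat I^2 v\|_{\rmL^2(\hat T)}$ by $\|\hat\rmd v\|_{\rmL^2(\hat T)}$ via a ``Poincar\'e-type inequality''. But for $2$-forms in dimension $3$ the kernel of $\hat\rmd$ is strictly larger than the constants (e.g.\ $x\,\rmd x\wedge\rmd y$ is closed but not constant), so $\|v-\bar v\|\cleq\|\hat\rmd v\|$ is not a valid step, and your computation of $\hat\rmd\hat v$ does not by itself produce the factor $h_T$. In fact a pure scaling-plus-finite-dimensionality argument only yields $\|\calF(A^n)-J_n\calF(A^n)\|_{\rmL^2(T)}\cleq\|\calF(A^n)\|_{\rmL^2(T)}$, with no $h_T$: on $\hat T$ there is no small parameter, and $\hat I^2$ does not reproduce all of $\calP$. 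The extra factor genuinely requires that $A^n=I_nA$ with $A$ smooth. Since $J_n$ is exact on $\rmd A^n\in W^2(T)$, the error lives entirely in $[A^n,A^n]$; and because $A$ is smooth, $\Phi^\star A^n$ equals a \emph{constant} $\frg$-valued $1$-form plus a remainder of size $O(h_T^2)$ (one order better than the naive $O(h_T)$). Expanding $[\Phi^\star A^n,\Phi^\star A^n]$ about that constant --- whose bracket with itself is constant and hence reproduced by $\hat I^2$ --- shows the interpolation error is $O(h_T^3)$ on $\hat T$, which after scaling gives the claimed $h_T$. This is precisely the mechanism used in the paper's proof of the next lemma.
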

\begin{proof}
By scaling, knowing that $\calF(A^n)$ lives in the space of Whitney forms of maximal polynomial order $2$ (\cite{ChrWin06} \S 3.3).
\end{proof}

Recall the formula:
\begin{equation}
\rmD|_{A} \calF(A) A' = \rmd A' + [A, A'].
\end{equation}
Since Whitney forms are stable under the exterior derivative, we have:
\begin{equation}
\rmD \calF(A^n) A' - J_n \rmD \calF(A^n) A' = [A^n, A'] - J_n [A^n, A'] .
\end{equation}

\begin{lemma}We have:
\begin{equation}
\| [A^n, A'] - J_n [A^n,A'] \|_{\rmL^2(T)} = \calO( h_T \|A'\|_{\rmL^2(T)}).
\end{equation}
\end{lemma}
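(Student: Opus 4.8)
The plan is to estimate $\|[A^n,A']-J_n[A^n,A']\|_{\mathrm{L}^2(T)}$ by a scaling argument, exploiting that $A^n$ is smooth-data-interpolated (so pointwise small, of size $\calO(h_T)$) while $A'$ is only a Whitney $1$-form. First I would recall that $[A^n,A']$ is not itself a Whitney form, but it is a polynomial $2$-form on $T$ of controlled degree: $A^n\in W^1(T)\otimes\frg$ has polynomial degree $1$ and $A'\in W^1(T)\otimes\frg$ has degree $1$, so the wedge/bracket $[A^n,A']$ has polynomial degree at most $2$ on $T$. The operator $J_n=I^2_n$ fixes $W^2(T)$, hence $\myone - J_n$ annihilates constants and, more to the point, on a fixed reference simplex $\hat T$ the operator $\myone-J_n$ is bounded on the finite-dimensional space of degree-$\le 2$ two-forms, with a bound depending only on $\hat T$.

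The key step is then the following scaling computation. Pull everything back to a reference simplex $\hat T$ via $\Phi(x)=h_T x+y$. Write $\hat A^n=\Phi^\star A^n$ and $\hat A'=\Phi^\star A'$; both are Whitney forms on $\hat T$, and $J_n$ intertwines with $\Phi^\star$ since interpolation is affine-natural, so $\Phi^\star\big([A^n,A']-J_n[A^n,A']\big)=[\hat A^n,\hat A']-\widehat{J}[\hat A^n,\hat A']$ where $\widehat J$ is interpolation on $\hat T$. By \eqref{eq:scaling} with $k=2$, $d=3$, $p=2$,
\begin{equation}
\|[A^n,A']-J_n[A^n,A']\|_{\mathrm{L}^2(T)} = h_T^{1/2}\,\big\|[\hat A^n,\hat A']-\widehat J[\hat A^n,\hat A']\big\|_{\mathrm{L}^2(\hat T)}.
\end{equation}
On $\hat T$ the space of degree-$\le 2$ two-forms is finite-dimensional, so $\|\myone-\widehat J\|$ is a fixed constant there, and the bracket is bounded by $\|\hat A^n\|_{\mathrm{L}^\infty(\hat T)}\|\hat A'\|_{\mathrm{L}^2(\hat T)}$ up to a constant from the bilinear bracket map on $\frg$; passing $\hat A^n$ back to $T$ via \eqref{eq:scaling} with $k=1$, $p=\infty$ gives $\|\hat A^n\|_{\mathrm{L}^\infty(\hat T)}=h_T\,\|A^n\|_{\mathrm{L}^\infty(T)}\cdot h_T^{-1}\cdot h_T = \calO(h_T)$ — more carefully, $A^n=I_nA$ with $A$ smooth has $\|A^n\|_{\mathrm{L}^\infty(T)}=\calO(h_T)$ since each edge integral is $\calO(h_e)$ and Whitney basis functions are $\calO(1)$, and then $\|\hat A^n\|_{\mathrm{L}^\infty(\hat T)}=h_T\|A^n\|_{\mathrm{L}^\infty(T)}=\calO(h_T^2)$; using instead $\|\hat A'\|_{\mathrm{L}^2(\hat T)}=h_T^{-1/2+1\cdot\!1}\ldots$ I would rather package $A'$ directly: $\|\hat A'\|_{\mathrm{L}^2(\hat T)}=h_T^{1-3/2}\|A'\|_{\mathrm{L}^2(T)}=h_T^{-1/2}\|A'\|_{\mathrm{L}^2(T)}$. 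Collecting the three powers of $h_T$, namely $h_T^{1/2}$ (from the outer rescaling) $\times\, h_T$ (from $\|\hat A^n\|_{\mathrm{L}^\infty}$, after cancelling one power against the smoothness gain, i.e. effectively $\|\hat A^n\|_{\mathrm{L}^\infty(\hat T)}\cleq h_T$) $\times\, h_T^{-1/2}$ (from $\|\hat A'\|_{\mathrm{L}^2}$), yields $\calO(h_T\|A'\|_{\mathrm{L}^2(T)})$, as claimed.

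The main obstacle — and the point deserving care — is bookkeeping the powers of $h_T$ correctly and isolating the single gained power from $A^n=I_nA$: the naive bound $\|\hat A^n\|_{\mathrm{L}^\infty(\hat T)}=\calO(1)$ would only give $\calO(h_T^{1/2})$, which is too weak, so one genuinely needs that interpolating a \emph{smooth} form makes $\hat A^n$ small of size $h_T$ on the reference simplex. Everything else (naturality of $J_n$ under affine pullback, boundedness of $\myone-\widehat J$ on a fixed finite-dimensional polynomial space, mesh regularity so constants are uniform over $T\in\calT_n$, boundedness of the bracket $\frg\times\frg\to\frg$) is standard and I would invoke it without detailed computation, exactly as in the previous lemma's one-line scaling proof.
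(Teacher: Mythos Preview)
Your scaling bookkeeping contains a sign error that hides a genuine gap. For a $2$-form in dimension $m=3$, formula~(\ref{eq:scaling}) gives
\[
\|[A^n,A']-J_n[A^n,A']\|_{\rmL^2(T)} = h_T^{-2+3/2}\,\|\Phi^\star(\cdots)\|_{\rmL^2(\hat T)} = h_T^{-1/2}\,\|\cdots\|_{\rmL^2(\hat T)},
\]
not $h_T^{+1/2}$. With the correct exponent your three factors multiply to $h_T^{-1/2}\cdot h_T\cdot h_T^{-1/2}=1$, so your argument only yields $\|[A^n,A']-J_n[A^n,A']\|_{\rmL^2(T)}=\calO(\|A'\|_{\rmL^2(T)})$, one power of $h_T$ short. (Incidentally, $\|A^n\|_{\rmL^\infty(T)}$ is $\calO(1)$, not $\calO(h_T)$: the edge integrals are $\calO(h_T)$ but the Whitney $1$-form basis functions scale like $h_T^{-1}$. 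Your ``effective'' bound $\|\hat A^n\|_{\rmL^\infty(\hat T)}=\calO(h_T)$ is nonetheless correct.)

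The missing idea is precisely the one you mention and then discard: $\myone-J_n$ does more than annihilate constant $2$-forms. The paper observes that when $A^n$ is a \emph{constant} $1$-form on $T$, the bracket $[A^n,A']$ lies in $W^2(T)$ itself (a constant $1$-form wedged with a Whitney $1$-form is a Whitney $2$-form), so $J_n$ is exact and the error vanishes identically. Hence on the reference simplex the map $\hat A'\mapsto(\myone-\hat J)[\hat A^n,\hat A']$ depends only on the nonconstant part of $\hat A^n$, and one gets
\[
\|(\myone-\hat J)[\hat A^n,\hat A']\|_{\rmL^2(\hat T)} \cleq \|\nabla\hat A^n\|_{\rmL^\infty(\hat T)}\,\|\hat A'\|_{\rmL^2(\hat T)}.
\]
Since $\|\nabla\hat A^n\|_{\rmL^\infty(\hat T)}=h_T^2\|\nabla A^n\|_{\rmL^\infty(T)}=\calO(h_T^2)$ (two powers from scaling, $\|\nabla A^n\|_{\rmL^\infty(T)}=\calO(1)$ because $A$ is smooth), the honest count becomes $h_T^{-1/2}\cdot h_T^{2}\cdot h_T^{-1/2}=h_T$, which is the claim.
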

\begin{proof}
Remark that the interpolation is exact on a tetrahedron if $A^n$ is constant on it. On a reference tetrahedron we can therefore write:
\begin{equation}
\|[A^n, A'] - J_n [A^n,A'] \|_{\rmL^2(\hat{T})} \cleq \| \nabla A^n\|_{\rmL^\infty(\hat{T})} \| A'\|_{\rmL^2(\hat{T})}.
\end{equation}
The estimate  on $T$ then follows by scaling.
\end{proof}

\begin{proposition}\label{prop:stepone}
The actions $\calS$ and $\calS^1$ are consistent of order $h$ for the $\rmL^2$ norm.
\end{proposition}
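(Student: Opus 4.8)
The plan is to bound the consistency error $\sup_{A'} |\rmD\calS_T(A^n)A' - \rmD\calS^1_T(A^n)A'| / \|A'\|_{\rmL^2(T)}$, summed over $T \in \calT_n^m$, and show it is $\calO(h_n)$. First I would write out both derivatives explicitly. Since $\calS_T(A) = \int_T \calF(A)\cdot\calF(A) = \int_T \rtr(\calF(A)^\sch \calF(A))$ and $\calS^1_T(A) = \int_T \rtr((J_n\calF(A))^\sch (J_n\calF(A)))$, using $\rmD|_A\calF(A)A' = \rmd A' + [A,A']$ and the fact that $J_n$ commutes with this linear map composed suitably (more precisely, $J_n \rmD\calF(A^n)A' = J_n(\rmd A' + [A^n,A']) = \rmd A' + J_n[A^n,A']$ since $\rmd A'$ is already a Whitney form as $A'\in X_n$), I get
\begin{align*}
\rmD\calS_T(A^n)A' &= 2\int_T \calF(A^n)\cdot(\rmd A' + [A^n,A']),\\
\rmD\calS^1_T(A^n)A' &= 2\int_T J_n\calF(A^n)\cdot J_n(\rmd A' + [A^n,A']).
\end{align*}
Subtracting, and writing $e = \calF(A^n) - J_n\calF(A^n)$ and $\eta = [A^n,A'] - J_n[A^n,A']$ (using $\rmd A' = J_n \rmd A'$), the difference becomes $2\int_T \calF(A^n)\cdot(\rmd A' + [A^n,A']) - 2\int_T J_n\calF(A^n)\cdot(\rmd A' + [A^n,A'] - \eta)$, which after grouping equals $2\int_T e\cdot(\rmd A' + [A^n,A']) + 2\int_T J_n\calF(A^n)\cdot\eta$.

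Next I would estimate each of the two terms by Cauchy–Schwarz in $\rmL^2(T)$. For the second term, $|\int_T J_n\calF(A^n)\cdot\eta| \le \|J_n\calF(A^n)\|_{\rmL^2(T)}\|\eta\|_{\rmL^2(T)} \cleq \|\calF(A^n)\|_{\rmL^2(T)} \cdot h_T\|A'\|_{\rmL^2(T)}$ by the two lemmas just proved (the second Lemma gives the bound on $\eta$, the first gives $\|J_n\calF(A^n)\|_{\rmL^2(T)}\cleq\|\calF(A^n)\|_{\rmL^2(T)}$). For the first term I split $\rmd A' + [A^n,A']$: the piece $\int_T e\cdot\rmd A'$ is bounded by $\|e\|_{\rmL^2(T)}\|\rmd A'\|_{\rmL^2(T)} \cleq h_T\|\calF(A^n)\|_{\rmL^2(T)}\|\rmd A'\|_{\rmL^2(T)}$ using the first Lemma, and I would use an inverse inequality (scaling) $\|\rmd A'\|_{\rmL^2(T)} \cleq h_T^{-1}\|A'\|_{\rmL^2(T)}$ on the finite-dimensional Whitney space, so this contributes $\cleq \|\calF(A^n)\|_{\rmL^2(T)}\|A'\|_{\rmL^2(T)}$ — which is only $\calO(1)$, not $\calO(h_T)$, so this naive split is too lossy. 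The fix is to not separate $\rmd A'$: instead note $\int_T e \cdot \rmd A' = \int_T e\cdot J_n\rmd A'$, and since $e = (\mathrm{id}-J_n)\calF(A^n)$ while $J_n\rmd A' \in W^2(\calT_n)$, one has $\int_T e\cdot(\text{anything in }W^2) = \int_T (\mathrm{id}-J_n)\calF(A^n)\cdot J_n(\cdot)$; but $J_n$ is not the $\rmL^2$-orthogonal projection, so this inner product need not vanish. Hence the genuinely delicate point is this: I would instead keep $\rmD\calF(A^n)A' = \rmd A' + [A^n,A']$ together and write the first term as $\int_T e\cdot \rmD\calF(A^n)A'$ and bound $\|\rmD\calF(A^n)A'\|_{\rmL^2(T)} \le \|\rmd A'\|_{\rmL^2(T)} + \|[A^n,A']\|_{\rmL^2(T)}$. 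The $[A^n,A']$ part is fine: $\|A^n\|_{\rmL^\infty}\cleq h_T$ (from $A^n_e = \calO(h_e)$ and scaling), so $\cleq h_T\|A'\|_{\rmL^2(T)}$, giving that sub-term $\cleq h_T\cdot h_T\|\calF(A^n)\|_{\rmL^2(T)}\|A'\|_{\rmL^2(T)}$.

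So the crux — and the step I expect to be the main obstacle — is controlling $\int_T e\cdot\rmd A'$ where $e = (\mathrm{id}-J_n)\calF(A^n)$ has norm $\calO(h_T\|\calF(A^n)\|_{\rmL^2(T)})$ but $\rmd A'$ can be of size $h_T^{-1}\|A'\|_{\rmL^2(T)}$. I would resolve this by a superconvergence-type observation on the reference tetrahedron: $\calF(A^n)$ is a Whitney form of polynomial degree $\le 2$, and $(\mathrm{id}-J_n)$ annihilates the degree-$\le 0$ part (and in fact $J_n$ is exact on $W^2$ of order matching the degree-one part of a smooth-enough form), so $e$ is genuinely $\calO(h_T)$ in $\rmL^2$ AND its $\rmL^2$-pairing against any Whitney $2$-form $v$ is $\calO(h_T\|\calF(A^n)\|_{\rmL^2(T)}\|v\|_{\rmL^2(T)})$ — i.e. the factor $h_T$ survives even when tested against the rough $\rmd A'$, because $\rmd A' \in W^2(\calT_n)$ is not rougher than $\calF(A^n)$ itself in the scaled sense; the apparent $h_T^{-1}$ is spurious since we only need $\|\rmd A'\|_{\rmL^2(T)}$ relative to $\|A'\|$ after the $h_T$ from $e$ is already banked, and $\|\rmd A'\|_{\rmL^2(T)} \cdot h_T$ need not beat $\|A'\|_{\rmL^2(T)}$ — wait, it does: $h_T\|\rmd A'\|_{\rmL^2(T)}\cleq \|A'\|_{\rmL^2(T)}$ by the inverse inequality. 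Tracing through: $|\int_T e\cdot\rmd A'| \cleq \|e\|_{\rmL^2(T)}\|\rmd A'\|_{\rmL^2(T)} \cleq (h_T\|\calF(A^n)\|_{\rmL^2(T)})(h_T^{-1}\|A'\|_{\rmL^2(T)}) = \|\calF(A^n)\|_{\rmL^2(T)}\|A'\|_{\rmL^2(T)}$, which is $\calO(1)$, confirming the loss is real and the superconvergence argument is \emph{needed}. Concretely, I would prove on $\hat T$ that for $w\in W^2(\hat T)$ of degree $\le 2$ and $v\in W^2(\hat T)$, $|\int_{\hat T}(w - \hat J w)\cdot v|\cleq \|w - \hat\Pi w\|\,\|v\|$ where $\hat\Pi$ is $\rmL^2$-projection, combined with $\|w-\hat\Pi w\| \cleq \|w - \hat J w\|$ — no; rather, the clean route is to observe that on each $T$, $\int_T(\calF(A^n) - J_n\calF(A^n))\cdot\rmd A'$ can be rewritten using $\rmd A' = \delta$-image, and since $R^2$ is injective on Whitney forms with $R^2 J_n = R^2$, we have $R^2(\calF(A^n)-J_n\calF(A^n))=0$, i.e. $\int_f(\calF(A^n)-J_n\calF(A^n)) = 0$ for every face $f$ of $T$; then integrating by parts / using that $\rmd A'$ restricted to $T$ is determined by its face integrals against a fixed basis, one gets cancellation of the leading term. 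I would carry this out via the cochain representation: write $\rmd A' = \sum_f (\delta A'_\bs)_f \lambda_f$ and $\calF(A^n) - J_n\calF(A^n) = \sum$ (no cochain content), pairing through $M(T)$ and using $\int_f(e) = 0$ to kill the $M$-diagonal interaction, leaving only higher-order terms $\calO(h_T\cdot\text{stuff})$. Granting that, summing over $T\in\calT_n^m$, applying Cauchy–Schwarz in $\ell^2$ over simplexes together with $\|\calF(A^n)\|_{\rmL^2(S)}\cleq\|\calF(A)\|_{\rmL^2(S)} + \calO(h_n) = \calO(1)$ (since $A$ is smooth and $A^n = I_nA \to A$), yields the global bound $\calO(h_n\|A'\|_{\rmL^2(S)})$, which is consistency of order $h$ for the $\rmL^2$ norm.
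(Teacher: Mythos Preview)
Your setup and decomposition coincide with the paper's: write the difference as
\[
\int_T(\calF-J_n\calF)\cdot \rmD\calF(A^n)A' \;+\; \int_T J_n\calF\cdot\big(\rmD\calF(A^n)-J_n\rmD\calF(A^n)\big)A',
\]
and bound the second summand via the two preceding lemmas. That part of your argument is correct and matches the paper verbatim.

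Where you diverge is in the first summand, specifically the piece $\int_T(\calF-J_n\calF)\cdot\rmd A'$. You are right that naive Cauchy--Schwarz together with the inverse inequality $\|\rmd A'\|_{\rmL^2(T)}\cleq h_T^{-1}\|A'\|_{\rmL^2(T)}$ only yields an $\calO(1)$ bound. The paper's one-line estimate $\cleq h_T\|\calF\|_{\rmL^2(T)}\|A'\|_{\rmL^2(T)}$ follows from Lemma~1 plus Cauchy--Schwarz \emph{provided} $\|\rmD\calF(A^n)A'\|_{\rmL^2(T)}\cleq\|A'\|_{\rmL^2(T)}$; the bracket part $[A^n,A']$ satisfies this, but $\rmd A'$ in general does not. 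The paper does not spell out how this is handled.

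Your attempted repairs, however, do not close the gap. The observation that $(\mathrm{id}-J_n)\calF(A^n)$ has zero face integrals does \emph{not} make it $\rmL^2$-orthogonal to $\rmd A'\in W^2(T)$, because $J_n$ is the de~Rham interpolant and not the $\rmL^2$-orthogonal projection; so there is no free cancellation, and the ``kill the $M$-diagonal'' heuristic does not go through. The superconvergence claim you sketch would amount precisely to such an orthogonality and is therefore unsupported.

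What does go through cleanly, and is all that is needed for the main theorem, is order-$h$ consistency in the graph norm $\|A'\|=\|\rmd A'\|_{\rmL^2}+\|A'\|_{\rmL^2}$, exactly as in Propositions~\ref{prop:steptwo} and~\ref{prop:laststep}: simply bound the first summand by $\|\calF-J_n\calF\|_{\rmL^2(T)}\,\|\rmD\calF(A^n)A'\|_{\rmL^2(T)}\cleq h_T\|\calF\|_{\rmL^2(T)}\big(\|\rmd A'\|_{\rmL^2(T)}+\|A'\|_{\rmL^2(T)}\big)$, sum over $T$, and apply Cauchy--Schwarz. If you want to salvage the pure $\rmL^2$ statement, you would need a genuinely new ingredient beyond the two lemmas; none of the mechanisms you propose supplies it.
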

\begin{proof}
We have:
\begin{equation}
\rmD \calS_T (A^n) A' = \int_T \calF(A) \cdot \rmD\calF(A) A',
\end{equation}
and:
\begin{equation}
\rmD \calS^1_T (A^n) A' = \int_T J_n\calF(A) \cdot \rmD J_n \calF(A) A'.
\end{equation}
With a more compact notation we can evaluate the difference:
\begin{align}
 |\ts \int (\calF - J \calF) \cdot \rmD \calF A' + \int J \calF \cdot (\rmD \calF - J \rmD \calF) A'|
\cleq   h_T \|\calF\|_{\rmL^2(T)} \| A'\|_{\rmL^2(T)}.
\end{align}
Summing these estimates for all the tetrahedrons and applying the Cauchy-Schwarz inequality gives the result.
\end{proof}

\paragraph{Step two}
We compare now $\calS^1$ and $\calS^2$.

\begin{lemma}\label{lem:fexpl} Let a face $f$ have vertexes $i,j,k$. We have:
\begin{align}
\int_f \calF(A^n) &=   \int_f \rmd A^n + \frac{1}{2} [A^n, A^n], \label{eq:intff}\\
&=  A_{ik}^n + A_{kj}^n + A_{ji}^n  + \frac{1}{6}\big([A^n_{ji}, A^n_{kj}] + [A^n_{kj}, A^n_{ik}] + [A^n_{ik}, A^n_{ji}] \big).
\end{align}
\end{lemma}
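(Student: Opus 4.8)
The plan is to treat the two equalities separately. The first is nothing but the definition $\calF(A)=\rmd A+\tfrac12[A,A]$ restricted to $f$ and integrated, so the work lies in the second, which I would split into the linear term $\int_f\rmd A^n$ and the quadratic term $\tfrac12\int_f[A^n,A^n]$.

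For the linear term I would invoke the commuting square (\ref{eq:drcom}): since $A^n=I_nA\in W^1(\calT_n)\otimes\frg$, one has $\int_f\rmd A^n=(\delta A^n_\bs)_f$, and the cochain formula $(\delta u)_{kji}=u_{ik}+u_{kj}+u_{ji}$ then gives $A^n_{ik}+A^n_{kj}+A^n_{ji}$. This is precisely the identity already used above in the estimate $(\delta A^n_\bs)_f=\int_f\rmd A=\calO(h_f^2)$.

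For the quadratic term I would first note that restricting a Whitney $1$-form to $f$ annihilates every basis form $\lambda_e$ whose edge $e$ is not an edge of $f$: if a vertex $p$ of $e$ is not a vertex of $f$ then $\lambda_p$ vanishes identically on $f$, hence so does the pullback of $\rmd\lambda_p$, and with it $\lambda_e|_f$. Thus $A^n|_f=A^n_{ji}\lambda_{ji}+A^n_{kj}\lambda_{kj}+A^n_{ik}\lambda_{ik}$, a sum over the three edges of $f$ only. Expanding the bracket of Lie-algebra-valued forms bilinearly via $[u\otimes g,u'\otimes g']=(u\wedge u')\otimes[g,g']$ reduces the problem to
\[
\ts\int_f[A^n,A^n]=\sum_{a,b}\big(\ts\int_f\lambda_a\wedge\lambda_b\big)\,[A^n_a,A^n_b],
\]
the sum running over ordered pairs of edges $a,b\in\{ji,kj,ik\}$; the diagonal terms drop out since $\lambda_a\wedge\lambda_a=0$.

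The computational heart — and the only real obstacle, though it is routine — is then to evaluate the six coefficients $\int_f\lambda_a\wedge\lambda_b$ with $a\neq b$. I would do this in barycentric coordinates on $f$, setting $s=\lambda_j$, $t=\lambda_k$, $\lambda_i=1-s-t$, so that $\int_f\rmd s\wedge\rmd t=\tfrac12$, and expanding $\lambda_{ji},\lambda_{kj},\lambda_{ik}$ in $\rmd s,\rmd t$. A one-line computation gives $\lambda_{ji}\wedge\lambda_{kj}=s\,\rmd s\wedge\rmd t$, $\lambda_{kj}\wedge\lambda_{ik}=t\,\rmd s\wedge\rmd t$ and $\lambda_{ik}\wedge\lambda_{ji}=(1-s-t)\,\rmd s\wedge\rmd t$, each integrating to $\tfrac16$ over $f$, while the reversed pairs are their negatives. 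Substituting back, a cyclic pair and its reverse contribute equally — the sign on the coefficient being cancelled by the sign in $[A^n_b,A^n_a]=-[A^n_a,A^n_b]$ — so that
\[
\ts\int_f[A^n,A^n]=\tfrac13\big([A^n_{ji},A^n_{kj}]+[A^n_{kj},A^n_{ik}]+[A^n_{ik},A^n_{ji}]\big);
\]
halving this and adding the linear term yields the stated formula. As an alternative to the coordinate computation one could cite the standard identity writing $\lambda_a\wedge\lambda_b$ as a multiple of the Whitney $2$-form $\lambda_f$ and use $\int_f\lambda_f=1$, but the direct calculation is shortest for just pinning down the constant $\tfrac16$.
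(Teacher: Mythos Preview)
Your proof is correct and follows exactly the approach of the paper: Stokes' theorem for the linear term, and the identity $\int_f \lambda_{ji}\wedge\lambda_{kj}=\tfrac16$ (together with its cyclic companions) for the quadratic term. The paper simply quotes this integral without derivation, whereas you compute it explicitly in barycentric coordinates; the content is the same.
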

\begin{proof}
Remark that:
\begin{equation}
\int_{f} \rmd A^n = A_{ik}^n + A_{kj}^n + A_{ji}^n,
\end{equation}
Using formulas of the type:
\begin{equation}
\int_{f} \lambda_{ji} \wedge \lambda_{kj} = 1/6,
\end{equation}
one gets the second term.
\end{proof}

\begin{proposition}\label{prop:ferror} We have:
\begin{equation}
\myone- F_f(A^n) = \int_f \calF(A^n) + \calO(h_f^3).
\end{equation}
\end{proposition}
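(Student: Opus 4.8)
The plan is to expand $F_f(A^n) = U_{ik}U_{kj}U_{ji} = \exp(-A^n_{ik})\exp(-A^n_{kj})\exp(-A^n_{ji})$ using the Baker--Campbell--Hausdorff formula and match the result against the expression for $\int_f \calF(A^n)$ provided by Lemma~\ref{lem:fexpl}. The key size input, already established by scaling, is $A^n_e = \calO(h_e) = \calO(h_f)$ for every edge $e$ of $f$, so every commutator $[A^n_\bullet, A^n_\bullet]$ is $\calO(h_f^2)$, every iterated double commutator is $\calO(h_f^3)$, and we are entitled to discard all terms of BCH order three and higher, absorbing them into the $\calO(h_f^3)$ remainder.

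First I would write, to the needed order, $\log F_f(A^n) = -(A^n_{ik}+A^n_{kj}+A^n_{ji}) + \tfrac12\big([A^n_{ik},A^n_{kj}] + [A^n_{ik},A^n_{ji}] + [A^n_{kj},A^n_{ji}]\big) + \calO(h_f^3)$, obtained by applying BCH twice (first to the product of the last two exponentials, then multiplying by the first). Then $\myone - F_f(A^n) = -\log F_f(A^n) - \tfrac12(\log F_f(A^n))^2 + \calO(h_f^3)$; since $\log F_f(A^n) = -(A^n_{ik}+A^n_{kj}+A^n_{ji}) + \calO(h_f^2)$ and, crucially, $A^n_{ik}+A^n_{kj}+A^n_{ji} = (\delta A^n_\bs)_f = \int_f \rmd A^n = \calO(h_f^2)$ by the cochain estimate recorded before Step one, the quadratic term $\tfrac12(\log F_f)^2$ is actually $\calO(h_f^4)$ and disappears into the remainder. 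Hence $\myone - F_f(A^n) = (A^n_{ik}+A^n_{kj}+A^n_{ji}) - \tfrac12\big([A^n_{ik},A^n_{kj}] + [A^n_{ik},A^n_{ji}] + [A^n_{kj},A^n_{ji}]\big) + \calO(h_f^3)$.

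It then remains to check that the commutator terms here agree with the $\tfrac16$-commutator terms in Lemma~\ref{lem:fexpl}. Using antisymmetry $A^n_{ik} = -A^n_{ki}$ etc., rewrite $\tfrac16\big([A^n_{ji},A^n_{kj}] + [A^n_{kj},A^n_{ik}] + [A^n_{ik},A^n_{ji}]\big)$ in a common cyclic form; the BCH-derived expression $-\tfrac12\big([A^n_{ik},A^n_{kj}] + [A^n_{ik},A^n_{ji}] + [A^n_{kj},A^n_{ji}]\big)$ should, after using $A^n_{ik}+A^n_{kj}+A^n_{ji} = \calO(h_f^2)$ to trade one summand for (minus) the sum of the other two inside each commutator (an operation that only injects further $\calO(h_f^3)$ errors, since a commutator of an $\calO(h_f)$ term with an $\calO(h_f^2)$ term is $\calO(h_f^3)$), collapse to exactly $\tfrac16$ times the cyclic commutator sum. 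This purely algebraic reconciliation of the two normalizations — $\tfrac12$ from BCH versus $\tfrac16$ from the Whitney-form integrals $\int_f \lambda_{ji}\wedge\lambda_{kj} = \tfrac16$ — is the one place demanding care, and is where I expect the bookkeeping to be most error-prone; the size estimates themselves are routine given the scaling bounds already in hand.
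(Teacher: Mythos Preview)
Your proposal is correct and follows exactly the approach the paper sketches in one sentence (``use BCH and compare with Lemma~\ref{lem:fexpl}''). You have supplied the details the paper omits, and in particular you correctly identify the key point: the estimate $(\delta A^n_\bs)_f = A^n_{ik}+A^n_{kj}+A^n_{ji} = \calO(h_f^2)$ is what makes both the $(\log F_f)^2$ term and the apparent mismatch between the $\tfrac12$ (BCH) and $\tfrac16$ (Whitney) commutator coefficients vanish into the $\calO(h_f^3)$ remainder.
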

\begin{proof}
$F_f(A^n)$ can be estimated with the help of the BCH formula and compared with the formula previously obtained for the right hand side.
\end{proof}

Using the same arguments as in the proof of Lemma \ref{lem:fexpl}, we get:
\begin{lemma} We have:
\begin{align}
\rmD|_A (\ts \int_{f} \calF(A)) A' = \  & \ts \int_{f} \rmd A' + [A, A']\\
 = \ & A'_{ik} + 1/6 ([A^n_{ji}, A'_{ik}] - [A^n_{kj}, A_{ik}']) + \\
     & A'_{kj} + 1/6 ([A^n_{ik}, A_{kj}'] - [A^n_{ji}, A_{kj}']) + \\
     & A'_{ji} + 1/6 ([A^n_{kj}, A'_{ji}] - [A^n_{ik}, A'_{ji}]). 
\end{align}
\end{lemma}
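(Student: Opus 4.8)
The plan is to differentiate the explicit formula from Lemma \ref{lem:fexpl} directly, treating $\int_f \calF(A)$ as a function of the $1$-cochain $A_\bs$ and applying the product (Leibniz) rule to each term.

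First I would recall from Lemma \ref{lem:fexpl} that, for a face $f$ with vertexes $i,j,k$,
\begin{equation*}
\ts \int_f \calF(A) = A_{ik} + A_{kj} + A_{ji} + \frac{1}{6}\big([A_{ji},A_{kj}] + [A_{kj},A_{ik}] + [A_{ik},A_{ji}]\big),
\end{equation*}
where the cochain components $A_{ik}$ etc.\ depend linearly on $A \in W^1(\calT)\otimes\frg$ via $A_{ji} = \int_{ji} A$. Since each $A_{ji}$ is a linear functional of $A$, its directional derivative in the direction $A'$ is simply $A'_{ji} = \int_{ji} A'$. The linear part $A_{ik}+A_{kj}+A_{ji}$ therefore contributes $A'_{ik}+A'_{kj}+A'_{ji}$.

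Next I would differentiate the quadratic bracket term. Using bilinearity of the Lie bracket and the Leibniz rule, $\rmD|_A [A_{ji},A_{kj}] A' = [A'_{ji},A_{kj}] + [A_{ji},A'_{kj}]$, and similarly for the other two brackets. At the evaluation point $A = A^n$ one writes $A_{ji} = A^n_{ji}$. Collecting the six resulting bracket terms and regrouping them according to which primed edge variable ($A'_{ik}$, $A'_{kj}$, or $A'_{ji}$) they contain, using antisymmetry $[x,y] = -[y,x]$ to normalise signs, yields exactly the three lines in the claimed formula: the $A'_{ik}$-terms combine to $\frac{1}{6}([A^n_{ji},A'_{ik}] - [A^n_{kj},A'_{ik}])$, and cyclically for the others. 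This also matches the first displayed line $\rmD|_A(\int_f \calF(A))A' = \int_f \rmd A' + [A,A']$, since $\int_f \rmd A' = A'_{ik}+A'_{kj}+A'_{ji}$ by Stokes and $\int_f [A,A']$ is computed by the same $\int_f \lambda_{ji}\wedge\lambda_{kj} = 1/6$ identities used in Lemma \ref{lem:fexpl}, now applied to the symmetrised product $[A,A'] = \frac12([A,A']+[A',A])$ of two distinct Whitney-form expansions.

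There is no real obstacle here: the only care needed is bookkeeping of signs when using the antisymmetry of the bracket to bring all six differentiated terms into the canonical grouping by primed edge, and making sure the orientation conventions $A_{ij} = -A_{ji}$ are applied consistently. I would present this as a one-line appeal to "the same arguments as in the proof of Lemma \ref{lem:fexpl}," followed by the regrouping, exactly as the statement already anticipates.
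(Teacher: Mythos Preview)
Your proposal is correct and is exactly the approach the paper intends: the paper gives no separate proof, simply writing ``Using the same arguments as in the proof of Lemma~\ref{lem:fexpl}, we get'', which amounts to differentiating the explicit cochain formula of Lemma~\ref{lem:fexpl} term by term via the Leibniz rule and regrouping by the primed edge variable. Your remark that the bracket contribution is computed via the same $\int_f \lambda_{ji}\wedge\lambda_{kj}=1/6$ identities is the right justification for the first displayed equality as well.
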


\begin{proposition}\label{prop:dferror}
We have:
\begin{equation}
-\rmD|_A F_{f}(A)A' = \rmD|_A \ts \int_{f} \calF(A) A' + \calO( h_f |\delta A'_\bs| + h_f^2|A'_\bs| ). 
\end{equation}
\end{proposition}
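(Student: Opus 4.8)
The plan is to differentiate the holonomy $F_f(A) = U_{ik}U_{kj}U_{ji}$ with $U_{e} = \exp(-A_e)$ along the direction $A'$, expand everything to the order needed, and match the result term-by-term against the formula for $\rmD|_A \int_f \calF(A) A'$ given in the preceding lemma. Throughout, $A = A^n = I_nA$, so we may freely use the estimates $A^n_e = \calO(h_e)$ and the cochain quantities $A'_\bs$, $\delta A'_\bs$ as the natural "large" and "small" bookkeeping variables; the target remainder $\calO(h_f|\delta A'_\bs| + h_f^2|A'_\bs|)$ says that we want the derivative of the holonomy to agree with the derivative of $\int_f \calF$ up to terms that are either quadratic in the edge values $A^n$ paired with one $A'$, or linear in $A^n$ paired with a coboundary $\delta A'_\bs$.

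First I would record the derivative of a single exponential: $\rmD|_A \exp(-A_e) A' = -\,\exp(-A_e)\,\mathrm{dexp}_{-A_e}(A'_e)$, and note $\mathrm{dexp}_{-A_e}(A'_e) = A'_e + \calO(|A_e||A'_e|) = A'_e + \calO(h_f|A'_\bs|)$. Applying the product rule to $F_f = U_{ik}U_{kj}U_{ji}$ and using $U_e = \myone + \calO(h_f)$ everywhere the factor is not being differentiated, one gets
\begin{equation}
-\rmD|_A F_f(A)A' = A'_{ik} + A'_{kj} + A'_{ji} + (\text{commutator terms}) + \calO(h_f^2|A'_\bs|),
\end{equation}
where the leading sum $A'_{ik}+A'_{kj}+A'_{ji}$ is exactly $\int_f \rmd A'$, the commutator terms come from (a) the $\calO(|A_e||A'_e|)$ corrections in $\mathrm{dexp}$ and (b) the cross terms where one factor is differentiated and a neighboring factor contributes its linear part $-A_e$. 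The second task is to organize these commutator terms: the ones of the shape $[A^n_e, A'_{e'}]$ with $e \ne e'$ should reproduce the $1/6$-weighted brackets appearing in the lemma's expression for $\rmD|_A\int_f\calF A'$, and the ones of the shape $[A^n_e, A'_e]$ (same edge) are not present in that expression and must be absorbed into the error — but these are $\calO(h_f|A'_\bs|)$ each, which is too big for the $h_f^2|A'_\bs|$ budget, so they must cancel against each other or combine into a coboundary. This is where the identity $A'_{ik}+A'_{kj}+A'_{ji} = (\delta A'_\bs)_f$ enters: terms like $[A^n_{ji}, A'_{ik}+A'_{kj}+A'_{ji}]$ that arise after grouping can be rewritten as $[A^n_{ji}, (\delta A'_\bs)_f] = \calO(h_f|\delta A'_\bs|)$.

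The main obstacle, I expect, is precisely this last bookkeeping step: collecting the genuinely first-order-in-$A^n$ commutator contributions and showing that, after using antisymmetry of the bracket, the conventions $A_{ij}=-A_{ji}$, and the coboundary identity, everything that is not accounted for by the $1/6$-brackets of the target lemma collapses either to a multiple of $[A^n_e, (\delta A'_\bs)_f]$ (hence $\calO(h_f|\delta A'_\bs|)$) or to terms that are quadratic in $A^n$ (hence $\calO(h_f^2|A'_\bs|)$, the second-order $\mathrm{dexp}$ and $\exp$ corrections). A clean way to manage this is to compute $\rmD|_A F_f(A)A'$ and $F_f(A)$ simultaneously via the same BCH expansion used in Proposition \ref{prop:ferror}: write $-\log F_f(A) = A^n_{ik}+A^n_{kj}+A^n_{ji} + \tfrac12(\text{brackets}) + \calO(h_f^3)$ as in Lemma \ref{lem:fexpl} and Proposition \ref{prop:ferror}, differentiate this scalar relation directly, and then convert back from $\rmD(-\log F_f)$ to $\rmD(\myone - F_f) = -\rmD F_f$ using $F_f = \myone + \calO(h_f^2)$, which costs only $\calO(h_f^2|A'_\bs| + h_f^2|\delta A'_\bs|)$ — within budget. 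With that route the proposition reduces to differentiating the already-established BCH expansion of $F_f$ and discarding terms below the stated order, which is routine once the order-counting is set up.
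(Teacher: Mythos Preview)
Your proposal is correct and follows the paper's route: write $-\rmD F_{kji}(A)A'$ by the product rule on the three exponentials, using $\phi(z)=(1-e^{-z})/z$ so that each differentiated factor contributes $\exp(-A_e)\phi(\ad(-A_e))A'_e$, then expand with $\phi'(0)=-1/2$ and compare term by term with the lemma's formula. One refinement of your diagnosis: the same-edge pieces from $\mathrm{dexp}$ and the adjacent $\exp(-A_e)$ factor actually combine into \emph{anticommutators} $-\tfrac12\{A^n_e,A'_e\}$, and together with the symmetric halves of the cross products these sum to $-\tfrac12\{(\delta A^n_\bs)_f,(\delta A'_\bs)_f\}=\calO(h_f^2|\delta A'_\bs|)$, already within budget; the genuine $h_f|\delta A'_\bs|$ contribution comes from the fact that the cross \emph{commutators} appear with coefficients $\pm\tfrac12$ rather than the target $\pm\tfrac16$, and the mismatch is exactly a linear combination of $[A^n_e,(\delta A'_\bs)_f]$ and $[(\delta A^n_\bs)_f,A'_e]$.
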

\begin{proof}
Define an entire function $\phi$, by setting, for $z\neq 0$:
\begin{equation}
\phi(z)= (1 -e^{-z})/z.
\end{equation}

Recall that:
\begin{equation}
F_{kji}= \exp(-A_{ik}) \exp(-A_{kj}) \exp(-A_{ji}),
\end{equation}
so that:
\begin{align}
-\rmD F_{kji}(A)A' =\ & \exp(-A_{ik}) \phi(\ad(-A_{ik}))A'_{ik}\exp(-A_{kj}) \exp(-A_{ji}) +\\
& \exp(-A_{ik}) \exp(-A_{kj}) \phi(\ad(-A_{kj}))A'_{kj} \exp(-A_{ji}) +\\
& \exp(-A_{ik}) \exp(-A_{kj}) \exp(-A_{ji}) \phi(\ad(-A_{ji}))A'_{ji}.
\end{align}
Expand using $\phi'(0) = -1/2$ and rearrange to obtain, up to the announced error term, the previously computed right hand side.
\end{proof}

\begin{proposition}\label{prop:steptwo}
The discrete actions $\calS^1$ and $\calS^2$ are consistent of order $h$ with respect to the norm defined by:
\begin{equation}
\|A\| = \|\rmd A\|_{\rmL^2} + \| A \|_{\rmL^2}.
\end{equation}
\end{proposition}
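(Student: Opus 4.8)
The plan is to compare $\rmD \calS^1_T(A^n) A'$ and $\rmD \calS^2_T(A^n) A'$ term by term in the sum over face pairs $f_0, f_1$, and then sum over tetrahedrons $T$ and apply Cauchy--Schwarz, exactly as in the proof of Proposition \ref{prop:stepone}. Write $c_f = \ts\int_f \calF(A^n)$ and $d_f = \myone - F_f(A^n)$. By Proposition \ref{prop:ferror} we have $d_f = c_f + \calO(h_f^3)$, and by Lemma \ref{lem:fexpl} together with the scaling estimates $A^n_e = \calO(h_e)$, $(\delta A^n_\bs)_f = \calO(h_f^2)$ we get $c_f = \calO(h_f^2)$, hence also $d_f = \calO(h_f^2)$. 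Differentiating, Proposition \ref{prop:dferror} gives $-\rmD|_{A^n} F_f(A^n) A' = \rmD|_{A^n} c_f \, A' + \calO(h_f |\delta A'_\bs| + h_f^2 |A'_\bs|)$, while the preceding lemma shows $\rmD|_{A^n} c_f \, A' = (\delta A'_\bs)_f + \calO(h_f |A'_\bs|) = \calO(|\delta A'_\bs| + h_f|A'_\bs|)$.

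Now both actions are bilinear forms in the face-quantities with the \emph{same} coefficients $M_{f_0f_1}(T)$, so the difference of derivatives is a sum of three types of cross terms: $(\rmd c_{f_0} - \rmd d_{f_0}) \cdot (\text{derivative term for } f_1)$, $c_{f_0} \cdot ((\rmD c - \rmD d) \text{ for } f_1)$, plus the analogous terms with $f_0$ and $f_1$ exchanged; here $\rmd c_f$ abbreviates $\rmD|_{A^n} c_f A'$ and similarly $\rmd d_f$. Using the estimates above, a generic such term is bounded by $|M_{f_0f_1}(T)|$ times a product of the form (a factor $\calO(h^2)$ or $\calO(h^3)$ coming from $c_{f}$ or $c_f - d_f$) and (a factor $\calO(|\delta A'_\bs| + h|A'_\bs|)$ coming from the derivative). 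Because $M_{f_0f_1}(T) = \calO(h_T)$ by scaling (the $\rmL^2$ mass matrix of Whitney $2$-forms scales like $h_T^{3-2-2+1} = h_T^{?}$ — one checks $M_{f_0f_1}(T) = \calO(h_T)$ in dimension $3$), and there are boundedly many faces per tetrahedron, summing over the face pairs in $T$ gives
\begin{equation}
|\rmD \calS^1_T(A^n) A' - \rmD \calS^2_T(A^n) A'| \cleq h_T \big( |(\delta A'_\bs)|_T^2 + h_T^2 |A'_\bs|_T^2 \big)^{1/2}\, h_T \big(\cdots\big)^{1/2},
\end{equation}
i.e.\ a bound of the form $h_T$ times a local norm of $A'$ — the point being that one extra power of $h_T$ is left over compared to the contributions of $A'$ themselves.

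Summing over $T \in \calT^m$ and applying Cauchy--Schwarz, one converts the local cochain norms $|\delta A'_\bs|_T$ and $|A'_\bs|_T$ into global ones; by the equivalence of cochain norms with functional norms on reference simplexes together with scaling, these are controlled by $\|\rmd A'\|_{\rmL^2(S)}$ and $\|A'\|_{\rmL^2(S)}$ respectively (the factors of $h_T$ tracking the degree and the dimension correctly so that no spurious negative powers of $h$ appear). Dividing by $\|A'\| = \|\rmd A'\|_{\rmL^2} + \|A'\|_{\rmL^2}$ and taking the supremum over $A' \in X_n$ then yields a bound $\calO(h_n)$, which is the asserted consistency of order $h$.

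The main obstacle I expect is the bookkeeping of the powers of $h_T$ in the scaling step: one must verify that the mass matrix coefficient contributes exactly one power $h_T$ in dimension $3$, that passing from the cochain quantities $A'_\bs$, $\delta A'_\bs$ to the $\rmL^2$ norms of $A'$ and $\rmd A'$ via (\ref{eq:scaling}) and the cochain/functional norm equivalence costs precisely the powers of $h_T$ that cancel against those coming from $M_{f_0f_1}(T)$, and that after Cauchy--Schwarz over the tetrahedrons one genuinely keeps one surplus factor $h_n$ rather than $h_n^0$. The error terms involving $|\delta A'_\bs|$ (rather than $h_T|A'_\bs|$) are the delicate ones, since $\delta A'_\bs$ is only controlled by $\|\rmd A'\|_{\rmL^2}$, which is why the norm in the statement must include the $\|\rmd A\|_{\rmL^2}$ piece; one has to make sure every occurrence of $\delta A'_\bs$ is paired with enough powers of $h_T$ (from $c_f$ or $M$) to close the estimate.
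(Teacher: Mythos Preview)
Your overall strategy is exactly the one the paper uses: decompose the derivative of each action as a sum over face pairs, estimate the difference $L'_n A' - L_n A'$ using Propositions \ref{prop:ferror} and \ref{prop:dferror}, multiply by the mass matrix coefficient, convert cochain quantities to local $\rmL^2$ norms by scaling, and finish with Cauchy--Schwarz over tetrahedra. So the architecture is right.

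There is, however, one concrete error that would derail the bookkeeping you flag as the obstacle: the mass matrix does \emph{not} scale like $h_T$. Applying the scaling identity (\ref{eq:scaling}) with $k=2$, $d=3$, $p=2$ gives $\|\lambda_f\|_{\rmL^2(T)} = h_T^{-1/2}\|\hat\lambda_{\hat f}\|_{\rmL^2(\hat T)}$, hence
\[
|M_{f_0f_1}(T)| \cleq h_T^{-1},
\]
which is what the paper uses. With this correction, and combining your estimates $c_f,d_f = \calO(h_T^2)$, $d_f-c_f=\calO(h_T^3)$, $\rmD c_f A',\,\rmD d_f A' = \calO(|\delta A'_\bs|+h_T|A'_\bs|)$, $\rmD(d_f-c_f)A'=\calO(h_T|\delta A'_\bs|+h_T^2|A'_\bs|)$, the four cross terms each contribute $\calO(h_T^3|\delta A'_\bs|+h_T^4|A'_\bs|)$, so after multiplying by $|M_{f_0f_1}|\cleq h_T^{-1}$ the local error is
\[
\calO\big(h_T^2|\delta A'_\bs|_T + h_T^3|A'_\bs|_T\big)
\;\cleq\; h_T^{5/2}\|\rmd A'\|_{\rmL^2(T)} + h_T^{5/2}\|A'\|_{\rmL^2(T)},
\]
using $|\delta A'_\bs|_T \cleq h_T^{1/2}\|\rmd A'\|_{\rmL^2(T)}$ and $|A'_\bs|_T \cleq h_T^{-1/2}\|A'\|_{\rmL^2(T)}$ from (\ref{eq:scaling}). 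Summing and applying Cauchy--Schwarz together with $(\sum_T h_T^5)^{1/2}\cleq h_n$ then yields the order-$h$ bound. In short: right idea, wrong sign on the mass-matrix exponent; once that is fixed the powers do close with exactly one surplus $h_n$, as you anticipated.
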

\begin{proof}
Let $f_0$ and $f_1$ be faces of a tetrahedron $T$ in $\calT_n$.
Define:
\begin{equation}
L_n A' = \rmD |_{A= A^n} \big (\ts\int_{f_0} \calF(A)^\sch \ts\int_{f_1} \calF(A) \big ) A'.
\end{equation}
and:
\begin{equation}
L'_n A' =  \rmD|_{A= A^n} \big ((\myone-F_{f_0}(A))^\sch (\myone-F_{f_1}(A))\big ) A'.
\end{equation}
Combining Propositions \ref{prop:ferror} and \ref{prop:dferror} gives:
\begin{equation}\label{eq:lml}
L_n' A' =  L_nA' + \calO (h_T^3 |\delta A'_\bs| + h_T^4|A'_\bs|). 
\end{equation}

We have, by scaling:
\begin{equation}
|M_{f_0 f_1}| \cleq h_T^{-1}.
\end{equation}
Insert it in the error term of ($\ref{eq:lml}$) and write:
\begin{align}
h_T^2 |\delta A'_\bs|_T + h_T^3 |A'_\bs|_T \cleq  h_T^{2+1/2} \| \rmd A'\|_{\rmL^2(T)} + h_T^{3  - 1/2} \|A'\|_{\rmL^2(T)}.
\end{align}

We sum over all tetrahedrons and apply a Cauchy-Schwarz inequality, remarking that:
\begin{equation}
(\sum_{T \in \calT^3} h_T^5)^{1/2} \cleq h_n.
\end{equation}
This concludes the proof.
\end{proof}

\paragraph{Step three}
We compare $\calS^2$ and $\calS'$.

\begin{proposition}We have:
\begin{equation}
U_{li}F_{kji}U_{il} = F_{kji} + \calO(h_T^3),
\end{equation}
and also:
\begin{equation}
\rmD|_A U_{li}(A)F_{kji}(A)U_{il}(A) A' = \rmD|_A F_{kji}(A) A' + \calO(h_T |\delta A'_\bs| + h_T^2 |A'_\bs|).
\end{equation}
\end{proposition}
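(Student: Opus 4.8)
The plan is to work on a fixed tetrahedron $T$ (so that $h_f \leq h_T$ and $h_e \leq h_T$ for its faces and edges) with $A = A^n := I_n A$, to write every conjugation as $U_{li} X U_{il} = \Ad(U_{li}) X$, and to rely on two elementary facts. First, since $A^n_{li} = \int_{li} A = \calO(h_T)$, one has $U_{li} = \exp(-A^n_{li}) = \myone + \calO(h_T)$ and $\Ad(U_{li}) = \exp(-\ad(A^n_{li})) = \mathrm{id} + \calO(h_T)$. Second, $F_{kji}(A^n) = \myone + \calO(h_T^2)$: indeed Lemma~\ref{lem:fexpl}, together with $A^n_e = \calO(h_T)$ and $(\delta A^n_\bs)_f = \calO(h_f^2)$, gives $\int_f \calF(A^n) = \calO(h_T^2)$, whence $\myone - F_{kji}(A^n) = \calO(h_T^2)$ by Proposition~\ref{prop:ferror}.

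For the first identity I would simply use $\Ad(U_{li})\myone = \myone$, so that
\[
U_{li} F_{kji} U_{il} - F_{kji} = (\Ad(U_{li}) - \mathrm{id}) F_{kji} = (\Ad(U_{li}) - \mathrm{id})(F_{kji} - \myone),
\]
which is $\calO(h_T)\cdot\calO(h_T^2) = \calO(h_T^3)$ by the two facts above.

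For the derivative identity I would apply the product rule to $A \mapsto U_{li}(A) F_{kji}(A) U_{il}(A)$, which yields the middle term $U_{li}(\rmD F_{kji} A') U_{il} = \Ad(U_{li})(\rmD F_{kji} A')$ together with $(\rmD U_{li} A') F_{kji} U_{il}$ and $U_{li} F_{kji}(\rmD U_{il} A')$. The middle term differs from $\rmD F_{kji} A'$ by $(\Ad(U_{li}) - \mathrm{id})(\rmD F_{kji} A')$; since $\|\Ad(U_{li}) - \mathrm{id}\| \cleq h_T$, while $|\rmD F_{kji} A'| \cleq |\delta A'_\bs| + h_T |A'_\bs|$ (from the lemma on $\rmD \int_f \calF(A) A'$ preceding Proposition~\ref{prop:dferror}, combined with Proposition~\ref{prop:dferror} itself), this difference is $\calO(h_T |\delta A'_\bs| + h_T^2 |A'_\bs|)$, i.e. the announced error. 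The other two terms are each only $\calO(|A'_\bs|)$, so I would group them: using $\rmD U_{il} A' = -U_{il}(\rmD U_{li} A') U_{il}$ (derivative of an inverse), their sum equals
\[
\big((\rmD U_{li} A') F_{kji} - (\Ad(U_{li}) F_{kji})(\rmD U_{li} A')\big) U_{il},
\]
and upon writing $F_{kji} = \myone + \calO(h_T^2)$ and $\Ad(U_{li}) F_{kji} = \myone + \calO(h_T^2)$ (the latter being exactly the first identity) the two $\myone$-parts cancel, leaving a quantity bounded by $\calO(h_T^2)\cdot|\rmD U_{li} A'|$. Since $\rmD U_{li} A' = -\exp(-A^n_{li})\phi(\ad(-A^n_{li})) A'_{li}$ as in the proof of Proposition~\ref{prop:dferror}, one has $|\rmD U_{li} A'| \cleq |A'_\bs|$, so these two terms contribute $\calO(h_T^2 |A'_\bs|)$. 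Adding the contributions gives the stated identity.

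The step I expect to be the main obstacle is precisely this last cancellation: the naive product rule produces boundary-type derivative terms that are too large by a factor $h_T^2$, and one must recognise that they pair up so that the order-$\myone$ parts of $F_{kji}$ and of its parallel transport cancel, leaving only the $\calO(h_T^2)$ deviation of the curvature from $\myone$. In particular the first identity of the proposition is genuinely used as an ingredient in the proof of the second.
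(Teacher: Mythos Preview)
Your argument is correct and matches the paper's proof in structure. For the first assertion both you and the paper write the difference as $(\Ad(U_{li})-\mathrm{id})(F_{kji}-\myone)$ and use $\Ad(U_{li})-\mathrm{id}=\calO(h_T)$ together with $F_{kji}-\myone=\calO(h_T^2)$. For the second assertion both proofs separate out the middle term $U_{li}(\rmD F_{kji}A')U_{il}$ and bound its deviation from $\rmD F_{kji}A'$ by $\calO(h_T)\cdot(|\delta A'_\bs|+h_T|A'_\bs|)$; the paper leaves this step implicit, and you spell it out.

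The only cosmetic difference is in the handling of the two side terms. The paper combines them directly into $-U_{li}\,[\,\phi(\ad(-A_{li}))A'_{li},\,F_{kji}\,]\,U_{il}$ and then replaces $F_{kji}$ by $F_{kji}-\myone$ inside the commutator, which immediately gives $\calO(h_T^2|A'_\bs|)$. Your grouping yields an equivalent expression, but note that you do not actually need the full first identity to conclude: it suffices to observe $\Ad(U_{li})F_{kji}-\myone=\Ad(U_{li})(F_{kji}-\myone)=\calO(h_T^2)$, since $\Ad(U_{li})$ is unitary. So your remark that the first identity is ``genuinely used'' in the second is slightly overstated; only the weaker estimate $F_{kji}-\myone=\calO(h_T^2)$ is needed, as in the paper.
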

\begin{proof}
For the first assertion we write:
\begin{equation}
U_{li}F_{kji}U_{il} - F_{kji} = U_{li}(F_{kji} - \myone) U_{il} - (F_{kji} -\myone),
\end{equation}
and conclude using:
\begin{equation}\label{eq:fm1}
F_{kji} - \myone  =  \calO(h_T^2).
\end{equation} 

For the second one we compute:
\begin{align}
\rmD|_A U_{li}F_{kji}U_{il} A'  = U_{li} \rmD|_A F_{kji}(A) A' U_{il} - U_{li} [\phi(\ad(-A_{li})) A_{li}', F_{kji}] U_{il}.
\end{align}
On the right hand side, in the second term, we can replace $F_{kji}$ by $F_{kji} - \myone$ and use again (\ref{eq:fm1}). From this the second assertion follows.
\end{proof}

As in the previous paragraph we can deduce:
\begin{proposition}\label{prop:laststep}
The discrete actions $\calS^2$ and $\calS'$ are consistent of order $h$ with respect to the norm defined by:
\begin{equation}
\|A\| = \|\rmd A\|_{\rmL^2} + \| A \|_{\rmL^2}.
\end{equation}
\end{proposition}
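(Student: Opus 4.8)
The plan is to mimic the proof of Proposition~\ref{prop:steptwo}, with the foregoing proposition (on parallel transport of the curvature $F_{kji}$ from $i$ to $l$) playing the role that Propositions~\ref{prop:ferror} and~\ref{prop:dferror} played there. Fix a tetrahedron $T \in \calT_n^3$ and a pair of faces $f_0, f_1$ of $T$ with chosen origins $\dot f_0, \dot f_1$, and write
\begin{equation}
L_n A' = \rmD|_{A = A^n}\big((\myone - F_{f_0}(A))^\sch (\myone - F_{f_1}(A))\big)A',
\end{equation}
\begin{equation}
L'_n A' = \rmD|_{A = A^n}\big(U_{\dot f_1\dot f_0}(A)(\myone - F_{f_0}(A)^\sch)U_{\dot f_0\dot f_1}(A)(\myone - F_{f_1}(A))\big)A'
\end{equation}
for the integrands (before multiplication by $M_{f_0f_1}(T)$ and application of $\rtr$) of $\rmD\calS^2_T$ and $\rmD\calS'_T$ respectively. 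The first, and essentially only, task is to establish
\begin{equation}\label{eq:lmlstar}
L'_n A' = L_n A' + \calO(h_T^3|\delta A'_\bs| + h_T^4|A'_\bs|),
\end{equation}
which is exactly the bound~(\ref{eq:lml}) obtained in Step two. Granted~(\ref{eq:lmlstar}), the remainder of the argument is identical to the end of the proof of Proposition~\ref{prop:steptwo}: insert $|M_{f_0f_1}(T)| \cleq h_T^{-1}$, pass from the cochain norms to $\rmL^2$ norms via the scaling bounds $h_T^2|\delta A'_\bs|_T \cleq h_T^{5/2}\|\rmd A'\|_{\rmL^2(T)}$ and $h_T^3 |A'_\bs|_T \cleq h_T^{5/2}\|A'\|_{\rmL^2(T)}$, sum over all tetrahedrons, and close with a Cauchy--Schwarz inequality together with $(\sum_{T\in\calT^3} h_T^5)^{1/2}\cleq h_n$.

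To prove~(\ref{eq:lmlstar}) I would expand both derivatives by the Leibniz rule. Since $\bbG$ is unitary and $U_{\dot f_0\dot f_1}U_{\dot f_1\dot f_0} = \myone$, the combination $U_{\dot f_1\dot f_0}(\myone - F_{f_0})U_{\dot f_0\dot f_1}$ is the parallel transport of the curvature of $f_0$ from its origin $\dot f_0$ to $\dot f_1$; as $\dot f_0$ and $\dot f_1$ are vertexes of $T$ and hence joined by an edge, this is precisely the quantity $U_{li}(\myone - F_{kji})U_{il}$ treated in the foregoing proposition (with $i = \dot f_0$, $l = \dot f_1$, and $i,j,k$ the vertexes of $f_0$). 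That proposition then supplies both
\begin{equation}
U_{\dot f_1\dot f_0}(\myone - F_{f_0})U_{\dot f_0\dot f_1} - (\myone - F_{f_0}) = \calO(h_T^3)
\end{equation}
and, using $U_{li}U_{il}=\myone$ to pass from $F_{kji}$ to $\myone-F_{kji}$,
\begin{equation}
\rmD|_A\big(U_{\dot f_1\dot f_0}(\myone - F_{f_0})U_{\dot f_0\dot f_1}\big)A' - \rmD|_A(\myone - F_{f_0})A' = \calO(h_T|\delta A'_\bs| + h_T^2|A'_\bs|).
\end{equation}
Feeding these, together with $\myone - F_{f_1} = \calO(h_T^2)$ from~(\ref{eq:fm1}) and $\rmD|_A(\myone - F_{f_1})A' = \calO(|\delta A'_\bs| + h_T|A'_\bs|)$ (which follows from Proposition~\ref{prop:dferror} and Lemma~\ref{lem:fexpl}), into the two Leibniz terms of $L'_n A' - L_n A'$, each term is a product of a factor of order $h_T^2$ or $h_T^3$ with a factor of order $|\delta A'_\bs| + h_T|A'_\bs|$ or $h_T|\delta A'_\bs| + h_T^2|A'_\bs|$, and summing yields~(\ref{eq:lmlstar}).

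I do not expect a genuine obstacle here: the insertion of parallel transports between origins is a perturbation by group elements close to $\myone$, which moves the curvature terms only at third order in $h_T$ — one order better than what the scaling and summation argument requires — and no function-theoretic estimate is needed beyond those already established. The only point demanding care is the bookkeeping of the two kinds of error term ($|\delta A'_\bs|$ versus $h_T|A'_\bs|$) through the multiplication by $h_T^{-1}$ and the scaling step, so that the powers of $h_T$ land on $h_T^{5/2}$ and the final Cauchy--Schwarz with $\sum_{T\in\calT^3} h_T^5 \cleq h_n^2$ produces the desired gain of one power of $h_n$; this is carried out exactly as in Step two.
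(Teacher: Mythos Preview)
Your proposal is correct and follows precisely the route the paper intends: the paper's own proof is the single line ``As in the previous paragraph we can deduce'', and you have faithfully reconstructed what that entails, using the preceding proposition on $U_{li}F_{kji}U_{il}$ in place of Propositions~\ref{prop:ferror} and~\ref{prop:dferror} and then repeating verbatim the mass-matrix bound, scaling, and Cauchy--Schwarz conclusion from Proposition~\ref{prop:steptwo}. The bookkeeping of the two Leibniz terms and the passage from $F_{kji}$ to $\myone-F_{kji}$ (and to the Hermitian conjugate) via $U_{li}\myone U_{il}=\myone$ is handled correctly.
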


\paragraph{Conclusion}
Adding the three estimates proved in Propositions \ref{prop:stepone}, \ref{prop:steptwo} and \ref{prop:laststep}, we get:
\begin{theorem}
The discrete actions $\calS$ and $\calS'$ are consistent of order $h$ with respect to the norm defined by:
\begin{equation}
\|A\| = \|\rmd A\|_{\rmL^2} + \| A \|_{\rmL^2}.
\end{equation}
\end{theorem}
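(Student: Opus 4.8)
The plan is to assemble the final statement as a straightforward consequence of the three consistency estimates already proved, combined with the triangle inequality for the consistency defect. The key observation is that consistency — as defined in the excerpt — is measured by $\sup_{A'\in X_n}|\rmD\calS_n(I_nA)A' - \rmD\calS'_n(I_nA)A'|/\|A'\|$, and this quantity is subadditive: if $\calS$ and $\calS^1$ are consistent of order $h$, $\calS^1$ and $\calS^2$ are consistent of order $h$, and $\calS^2$ and $\calS'$ are consistent of order $h$, then by inserting and removing the intermediate derivatives $\rmD\calS^1_n(I_nA)A'$ and $\rmD\calS^2_n(I_nA)A'$ and applying the triangle inequality term by term inside the supremum, $\calS$ and $\calS'$ are consistent of order $h$.

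The one subtlety to address is that the three intermediate results are stated with respect to \emph{different} norms: Proposition \ref{prop:stepone} uses the $\rmL^2$ norm, while Propositions \ref{prop:steptwo} and \ref{prop:laststep} use the graph norm $\|A\| = \|\rmd A\|_{\rmL^2} + \|A\|_{\rmL^2}$. Since $\|A'\|_{\rmL^2} \le \|A'\|$ pointwise for this combined norm, dividing by the larger norm $\|A'\|$ only decreases the supremum; hence the Step-one estimate, a fortiori, also holds with respect to the graph norm. So all three estimates can be read as estimates in the single norm $\|A\| = \|\rmd A\|_{\rmL^2} + \|A\|_{\rmL^2}$, and the triangle inequality applies cleanly. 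I would state this norm-comparison remark explicitly, since it is the only non-mechanical ingredient.

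Concretely, the proof reads: fix smooth $A$ and put $A^n = I_nA$. For any $A'\in X_n$,
\begin{align*}
|\rmD\calS_T(A^n)A' - \rmD\calS'_T(A^n)A'| \le{}& |\rmD\calS_T(A^n)A' - \rmD\calS^1_T(A^n)A'| \\
&{}+ |\rmD\calS^1_T(A^n)A' - \rmD\calS^2_T(A^n)A'| \\
&{}+ |\rmD\calS^2_T(A^n)A' - \rmD\calS'_T(A^n)A'|.
\end{align*}
Summing over $T\in\calT^m$, dividing by $\|A'\|$, and taking the supremum over $A'\in X_n$, each of the three resulting terms is $\calO(h_n)$ by Propositions \ref{prop:stepone}, \ref{prop:steptwo}, and \ref{prop:laststep} respectively (using the norm comparison above for the first). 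Hence the total defect is $\calO(h_n)$, which is exactly consistency of order $h$ with respect to $\|\cdot\|$.

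I do not anticipate a genuine obstacle here: the theorem is a bookkeeping corollary and all the analytic work has been done in the three Propositions. The only point requiring a word of care is the norm mismatch, and that is resolved by the trivial inequality $\|\cdot\|_{\rmL^2}\le\|\cdot\|$; one should also note in passing that the subadditivity of the consistency functional is immediate because the supremum of a sum is at most the sum of the suprema. If one wanted to be scrupulous, one might remark that $\Omega^1(S)\otimes\frg$ is dense in the Banach space underlying $\|\cdot\|$ and that each $X_n$ embeds in it, but these are hypotheses already built into the definition of consistency and need not be re-verified.
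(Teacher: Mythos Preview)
Your proposal is correct and matches the paper's own argument, which is simply ``Adding the three estimates proved in Propositions~\ref{prop:stepone}, \ref{prop:steptwo} and \ref{prop:laststep}.'' Your explicit treatment of the norm mismatch (Proposition~\ref{prop:stepone} uses only $\|\cdot\|_{\rmL^2}$, resolved via $\|\cdot\|_{\rmL^2}\le\|\cdot\|$) is a detail the paper leaves implicit but is exactly right.
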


The arguments introduced also immediately show that, concerning the action itself, we have consistency of order $h^2$. That is, if $A$ is a smooth gauge potential, we have:
\begin{equation}
\calS_n(I_nA) - \calS'_n(I_nA) = \calO(h_n^2).
\end{equation}

\section{\label{sec:noe} A discrete Noether's theorem}
In this section we propose an analogue of Noether's first theorem \cite{Olv93}, expressed for discretizations over simplicial complexes, when the group acting on the fields preserves fibers, as defined below. This discrete result provides a discrete conservation law associated with discrete gauge invariance. While it does not capture the full power of the continuous one, it is sufficient to prove constraint preservation for evolution problems as in \cite{ChrHal11IMA}. Recall that non-invariance of Lie algebra valued Whitney forms under discrete gauge transformations makes the standard action (\ref{eq:action}) problematic for the simulation of evolution, because constraints are not preserved \cite{ChrWin06}.  For electromagnetics, the conservation law associated in the continuum with gauge invariance, is nothing but electric charge conservation, of great physical significance.

Related discrete Noether's theorems have been discussed in particular in \cite{ManQui05}\cite{Man06}.

We suppose we have a finite simplicial complex $\calT$. The maximal dimension of the simplexes in $\calT$ is $m$.  We write $S \subsimp T$ to say that $S$ is a subsimplex of $T$. If $S$ is a simplex and $i$ a vertex not in $S$, $S+i$ is the simplex obtained by adjoining the vertex $i$ to $S$. Conversely, if $S$ is a simplex and $i$ a vertex of $S$, $S-i$ is the face of $S$ opposite $i$.

We suppose that on each maximal simplex $T\in \calT^m$ we have attached fields $\Phi_T$ of the form:
\begin{equation}
\Phi_T = (\Phi_T(S))_{S \subsimp T} \in \prod_{S \subsimp T} V_T(S).
\end{equation}
That is, $\Phi_T$ attaches a value in some space $V_T(S)$ to each subsimplex $S$ of $T$. We call $V_T(S)$ the fiber above $S$.

We suppose that we have a Lagrangian $\calL_T$ attached to $T$, which is a function:
\begin{equation}
\calL_T : \prod_{S \subsimp T} V_T(S) \to \bbR.
\end{equation}

In the following we fix a simplex $T \in \calT^m$.
We suppose we have a one parameter group action $\Lambda_T$ which acts separately on each fiber $V_{T}(S)$:
\begin{equation}
\Lambda_{T}(S): \bbR \to \mathrm{Aut}( V_T(S)),
\end{equation}
and for $t \in \bbR$: 
\begin{equation}
\Lambda_{T}[t] \Phi_T = (\Lambda_{T}(S)[t]\Phi_T(S))_{S \subsimp T}.
\end{equation}
We suppose that this group action leaves $\calL_T$ invariant:
\begin{equation}
\forall t \in \bbR \quad \calL_T(\Lambda_T[t]\Phi_T) = \calL_T(\Phi_T).
\end{equation}
We define the (local) infinitesimal generators:
\begin{equation}
\xi_T(S)  =  \partial|_{t=0} \Lambda_T(S)[t]\Phi_T(S).
\end{equation}
and the (local) Euler-Lagrange functions:
\begin{equation}
E_T(S) = \partial|_S \calL_T(\Phi_T),
\end{equation}
and put:
\begin{equation}
F_T(S) = E_T(S) \xi_T(S).
\end{equation}

For each simplex $S\subsimp T$ and each $i \in T \setminus S$ choose a number $p_T(i,S)$ subject to the condition that, for any simplex $S'$ of dimension at least $1$:
\begin{equation}\label{eq:discpart}
\sum_{i \in S'} p_T(i, S'-i) = 1.
\end{equation}

\begin{proposition}\label{prop:discnoether}
Define, for any vertex $i\in T$:
\begin{equation}
W_T(i) = F_T(i) + \sum_{S\subsimp T: i \not \in S} p_T(i,S) F_T(S+i),
\end{equation}
and for any two distinct vertexes $i,j\in T$: 
\begin{equation}
V_T(i,j) = F_T(i) - F_T(j) + \sum_{S\subsimp T: i,j \not \in S} p_T(i,S) F_T(S+i) - p_T(j,S)F_T(S+j).
\end{equation}
Then we have:
\begin{equation}\label{eq:noether}
(m+1)W_T(i) = \sum_{j:j \neq i} V_T(i,j).
\end{equation}
\end{proposition}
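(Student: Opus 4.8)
The plan is to separate the argument into one analytic input — the differentiated invariance of $\calL_T$ — and a purely combinatorial identity for the formal symbols $F_T(S)$ that uses only the normalization (\ref{eq:discpart}).

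\emph{Step 1: the Noether identity.} Differentiate the hypothesis $\calL_T(\Lambda_T[t]\Phi_T) = \calL_T(\Phi_T)$ in $t$ at $t=0$. By the chain rule the left side gives $\sum_{S\subsimp T}\partial|_S\calL_T(\Phi_T)\cdot\partial|_{t=0}\Lambda_T(S)[t]\Phi_T(S)$, which by the definitions of $\xi_T(S)$, $E_T(S)$ and $F_T(S)$ is exactly $\sum_{S\subsimp T}F_T(S)$; the right side is constant. Hence
\begin{equation}
\sum_{S\subsimp T} F_T(S) = 0 .
\end{equation}
This is the only point at which invariance enters; afterwards the $F_T(S)$ are merely coefficients.

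\emph{Step 2: the combinatorial identity.} First reindex: in $W_T(k)$ replace each pair $(k,S)$ with $k\notin S$ by the simplex $R=S+k$, so that $p_T(k,S)$ becomes a weight attached to the flag ``vertex $k$ of $R$''. Summing $W_T(k)$ over the $m+1$ vertices $k$ of $T$, every simplex $R$ of dimension $\ge 1$ collects the coefficient $\sum_{k\in R}p_T(k,R-k)=1$ by (\ref{eq:discpart}), and each vertex $\{k\}$ collects the standalone term $F_T(\{k\})$; therefore $\sum_{k\in T}W_T(k)=\sum_{S\subsimp T}F_T(S)$, which vanishes by Step 1. Next, since $T$ has $m+1$ vertices, $\sum_{j\neq i}\bigl(W_T(i)-W_T(j)\bigr)=(m+1)W_T(i)-\sum_{k\in T}W_T(k)$. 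It then remains to compare $\sum_{j\neq i}V_T(i,j)$ with $\sum_{j\neq i}\bigl(W_T(i)-W_T(j)\bigr)$. Expanding $V_T(i,j)$ term by term, its only discrepancy with $W_T(i)-W_T(j)$ sits on simplices $R$ with $i,j\in R$, whose weights $W_T$ carries but which the restricted sum $\sum_{S:i,j\notin S}$ in $V_T$ omits; summing this discrepancy over $j\neq i$ and applying (\ref{eq:discpart}) once more to each such $R$, one checks that the net difference $(m+1)W_T(i)-\sum_{j\neq i}V_T(i,j)$ reduces to $\sum_{S\subsimp T}F_T(S)$. Combining with Step 1 yields (\ref{eq:noether}).

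\emph{Main obstacle.} The delicate part is the bookkeeping in that last comparison: for each simplex $R\subsimp T$ one must keep straight whether $i\in R$, whether $R$ is a vertex (where the standalone $F_T(\{k\})$ terms are handled separately from the $p_T$-weighted ones), and exactly which vertices $j\neq i$ make $F_T(R)$ appear in $V_T(i,j)$. The argument only closes because the inner sums over $p_T$-weights that arise are precisely those normalized to $1$ by (\ref{eq:discpart}); this is the step where (\ref{eq:discpart}) is used non-trivially, and I would double-check it, together with the small cases $m=1$ and $m=2$, before writing the general count. Orientations play no role, since everything is phrased directly in terms of the $F_T(S)$.
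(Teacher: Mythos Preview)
Your overall architecture—derive $\sum_{S}F_T(S)=0$ from differentiating the invariance, then reduce everything to a combinatorial rearrangement using (\ref{eq:discpart})—is exactly the paper's. The paper also computes $\sum_{j\neq i}V_T(i,j)$ by separating the ``$+$'' and ``$-$'' halves and manipulating until only $\sum_S F_T(S)$ and $W_T(i)$ survive; your detour through $\sum_k W_T(k)=\sum_S F_T(S)$ and $\sum_{j\neq i}(W_T(i)-W_T(j))$ is a minor variant of the same idea.

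The gap is precisely at the step you yourself flag as the ``main obstacle'': the claim that $(m+1)W_T(i)-\sum_{j\neq i}V_T(i,j)$ reduces to $\sum_S F_T(S)$ is \emph{not} true for arbitrary weights satisfying (\ref{eq:discpart}). Do the $m=1$ check you propose: with $T=\{0,1\}$ no subsimplex avoids both vertices, so $V_T(0,1)=F_T(0)-F_T(1)$, while $2W_T(0)=2F_T(0)+2p_T(0,\{1\})F_T(\{0,1\})$; the difference is $F_T(0)+F_T(1)+2p_T(0,\{1\})F_T(\{0,1\})$, which equals $\sum_S F_T(S)$ only when $p_T(0,\{1\})=\tfrac12$. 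In general, when you compare $\sum_{j\neq i}V_T(i,j)$ with $\sum_{j\neq i}(W_T(i)-W_T(j))$, the surviving coefficient of $F_T(R)$ for each $R\ni i$ with $\dim R\ge 1$ is $[(\dim R+1)\,p_T(i,R{-}i)-1]$: the condition (\ref{eq:discpart}) only controls the \emph{sum over all vertices of $R$} of $p_T(\cdot,R{-}\cdot)$, and cannot kill this single term. So (\ref{eq:noether}) holds as stated only for the uniform choice $p_T(i,S)=1/(\dim(S{+}i)+1)$. The paper's own argument has the same lacuna—its ``second remark'' passes from the first line to the second by an equality that, once both sides are expanded over $S'\ni i$, also requires $(\dim S'+1)\,p_T(i,S'{-}i)=1$. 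Your instinct that the bookkeeping is delicate was right; following your own advice and running the small cases would have exposed the issue.
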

\begin{proof}
In this proof, in which $T$ is fixed, we drop the index $T$. The summation variables $S,S'$ are subsimplexes of $T$. First we remark:
\begin{eqnarray}
&  & \sum_{j: j \neq i} \big( F(i) + \sum_{S: i,j \not \in S} p(i,S) F(S+i) \big)\\
& = & m W(i) - \sum_{j: j \neq i} \sum_{S: \substack{i \not \in S\\ j \in S}} p(i,S) F(S+i).
\end{eqnarray}
then we remark: 
\begin{eqnarray}
& &  \sum_{j: j \neq i} \big( F(j)  + \sum_{S: i,j \not \in S} p(j,S)F(S+j) + \sum_{S: \substack{i \not \in S\\ j \in S}} p(i,S) F(S+i)\big)\\
& = &  \sum_{j:j \neq i} \big(F(j) + \sum_{S': j \in S'} p(j,S'-j)F(S') \big),\\
& = &  \sum_{S'} F(S')  -  W(i).
\end{eqnarray}
From invariance of the Lagrangian we get :
\begin{equation}
\sum_{S'} F(S') = 0,
\end{equation}
and this concludes the proof.
\end{proof}

In the applications we have in mind, if a simplex $S\in \calT$ is included in two maximal simplexes $T,T' \in \calT^m$ we have   $V_T(S)= V_{T'}(S)$, and the global variable $\Phi$ has the property $\Phi_T(S)= \Phi_{T'}(S) $. When this happens for all choices $S,T, T'$ such that $S \subsimp T, T' \in \calT^m$,  we have a well defined fiber $V_S$ above each $S\in \calT$ and the action $\calS$ will be of the form:
\begin{equation}
\calS = \sum_{T \in \calT^m} \calL_T : \prod_{S \in \calT} V(S) \to \bbR.
\end{equation}
Moreover we suppose that the group action $\Lambda$ acts separately on the fibers $V(S)$, independently of any embedding into a maximal simplex $T$. In this setting we define the (global) infinitesimal generators:
\begin{equation}
\xi(S) = \partial|_{t=0} \Lambda(S)[t]\Phi(S), 
\end{equation}
 the (global) Euler Lagrange functions:
\begin{equation}
E(S) = \partial|_S \calL(\Phi) = \sum_{T\in \calT^m: S \subsimp T} E_T(S),
\end{equation}
and put:
\begin{equation}
F(S) = E(S) \xi(S).
\end{equation}
We suppose finally that we have chosen the numbers $p_T(i,S)$ independently of $T$ containing $i$ and $S$. When $i$ and $S$ are not included in any simplex of $\calT$ we set $p(i,S)=0$. The preceding Proposition gives, by adding contributions from all maximal simplexes $T$:

\begin{proposition}\label{prop:discnoetherglob}
Define, for any vertex $i\in \calT$:
\begin{equation}
W(i) = F(i) + \sum_{S\in \calT : \substack{S+i \in \calT\\i \not \in S}} p(i,S) F(S+i),
\end{equation}
and for any two distinct vertexes $i,j\in \calT$ linked by an edge: 
\begin{equation}
V(i,j) = \sum_{T\in \calT^m:i,j \in T} V_T(i,j).
\end{equation}
Then we have:
\begin{equation}\label{eq:noetherglob}
(m+1)W(i) = \sum_{j:i+j\in \calT^1} V(i,j).
\end{equation}
\end{proposition}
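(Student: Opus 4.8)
The plan is to deduce Proposition~\ref{prop:discnoetherglob} from Proposition~\ref{prop:discnoether} purely by summation over maximal simplexes, carefully tracking which subsimplexes occur in which $T$. First I would record the relation between the local and global Euler--Lagrange functions, namely $E(S) = \sum_{T \in \calT^m: S \subsimp T} E_T(S)$, which is what makes $F(S) = E(S)\xi(S) = \sum_{T: S \subsimp T} F_T(S)$ once we use that $\xi(S)$ is intrinsic to the fiber $V(S)$ and independent of the embedding (this is the hypothesis that $\Lambda$ acts separately on the $V(S)$). Likewise $\Phi_T(S) = \Phi(S)$ for all admissible $T,S$, so $F_T(S)$ is genuinely the ``$T$-summand'' of $F(S)$.

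Next I would sum the local identity $(m+1)W_T(i) = \sum_{j \neq i} V_T(i,j)$ of Proposition~\ref{prop:discnoether} over all $T \in \calT^m$ containing the fixed vertex $i$. On the right-hand side, $\sum_{T \ni i}\sum_{j \neq i} V_T(i,j)$ regroups as $\sum_{j: i+j \in \calT^1} \sum_{T \in \calT^m: i,j \in T} V_T(i,j) = \sum_{j: i+j\in\calT^1} V(i,j)$, using that $V_T(i,j)$ is only nonzero (indeed only defined in the sum) when both $i$ and $j$ lie in $T$, hence in particular when $i+j$ is an edge of $\calT$; pairs $j$ with $i+j \notin \calT^1$ contribute nothing because no maximal $T$ contains both. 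On the left-hand side, $\sum_{T \ni i} W_T(i) = \sum_{T\ni i}\big(F_T(i) + \sum_{S \subsimp T:\, i\notin S} p_T(i,S) F_T(S+i)\big)$; since $p_T(i,S)$ has been chosen independently of $T$ (and set to $0$ when $S+i \notin \calT$), and since $F_T(i)$ sums over $T \ni i$ to $F(i)$ and $F_T(S+i)$ sums over $T \supseteq S+i$ to $F(S+i)$, this collapses to $F(i) + \sum_{S \in \calT:\, S+i\in\calT,\, i\notin S} p(i,S) F(S+i) = W(i)$. Assembling the two sides gives $(m+1)W(i) = \sum_{j: i+j\in\calT^1} V(i,j)$.

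The only subtlety worth spelling out is the bookkeeping in the left-hand sum: an interior subsimplex $S+i$ belongs to several maximal simplexes, and one must check that every such $T$ that shows up in $W_T(i)$ (i.e.\ those with $S \subsimp T$, equivalently $S+i \subsimp T$ since $i \in T$) is exactly the set of maximal simplexes over which $F_T(S+i)$ is summed to produce $F(S+i)$ --- no double counting, nothing missing. This is immediate from $F(S') = \sum_{T \in \calT^m: S' \subsimp T} F_T(S')$ applied with $S' = S+i$, together with the $T$-independence of $p$. I do not anticipate a real obstacle here; the content is entirely in the local Proposition~\ref{prop:discnoether}, and this step is a reindexing of finite sums. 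If one wanted to be fully careful, the single point to verify is that the convention $p(i,S) = 0$ when $S+i$ is not a simplex of $\calT$ is consistent with the local condition \eqref{eq:discpart}, which it is, since that condition only constrains $p_T(i,S'-i)$ for $i \in S' \subsimp T$, i.e.\ precisely when $S' = (S'-i)+i$ is a genuine simplex.
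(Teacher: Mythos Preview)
Your proposal is correct and follows exactly the approach indicated in the paper, which merely states that the result is obtained from Proposition~\ref{prop:discnoether} ``by adding contributions from all maximal simplexes $T$''. You have carefully spelled out the reindexing of the double sums on both sides and the reason why the $T$-summands of $F(S')$ and $W(i)$ recombine into the global quantities, which is precisely the bookkeeping the paper leaves implicit.
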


In brief, equation (\ref{eq:noetherglob}) expresses a weighted sum of (global)  Euler-Lagrange functions applied to infinitesimal generators, as a discrete divergence. Indeed it is natural to think of $V(i,j)$ as degrees of freedom of a vectorfield $V$. Choose any cellular complex dual to $\calT$, so that, in particular, the domain is covered by cells dual to the vertexes $i \in \calT^0$. Then $V(i,j)$ is the flux from the cell dual to $i$ into the cell dual to $j$, through the dual face of the edge $ij \in \calT^1$. The right hand side of $(\ref{eq:noetherglob})$ is then the total flux leaving the cell dual to $i$, which is the natural degree of freedom for the divergence of $V$. The essential antisymmetry property  $V(i,j) = -V(j,i)$ guarantees that summing the discrete divergence over a union of top-dimensional dual cells, leaves only a boundary term.

These considerations apply directly to the proposed simplicial gauge theory, for which moreover we have variables attached only to $0$- and $1$- simplexes.

\section*{Acknowledgments}
We are grateful to Elizabeth Mansfield for helpful comments on Noether's theorems.

This work, conducted as part of the award ``Numerical analysis and simulations of geometric wave equations''  made under the European Heads of Research Councils and European Science Foundation EURYI (European Young Investigator) Awards scheme, was supported by funds from the 
Participating Organizations of EURYI and the EC Sixth Framework Program.

\bibliography{alexandria}{}

\begin{thebibliography}{10}

\bibitem{ArnFalWin06}
D.~N. Arnold, R.~S. Falk, and R.~Winther.
\newblock Finite element exterior calculus, homological techniques, and
  applications.
\newblock {\em Acta Numer.}, 15:1--155, 2006.

\bibitem{ArnFalWin10}
D.~N. Arnold, R.~S. Falk, and R.~Winther.
\newblock Finite element exterior calculus: from {H}odge theory to numerical
  stability.
\newblock {\em Bull. Amer. Math. Soc. (N.S.)}, 47(2):281--354, 2010.

\bibitem{Bos88}
A.~Bossavit.
\newblock Mixed finite elements and the complex of {W}hitney forms.
\newblock In {\em The mathematics of finite elements and applications, {VI}
  ({U}xbridge, 1987)}, pages 137--144. Academic Press, London, 1988.

\bibitem{ChrFriLee82}
N.H. Christ, R.~Friedberg, and T.D. Lee.
\newblock Random lattice field theory: General formulation.
\newblock {\em Nuclear Physics, Section B}, 202(1):89--125, 1982.
\newblock cited By (since 1996) 71.

\bibitem{Chr04M3AS}
S.~H. Christiansen.
\newblock A characterization of second-order differential operators on finite
  element spaces.
\newblock {\em Math. Models Methods Appl. Sci.}, 14(12):1881--1892, 2004.

\bibitem{Chr07NM}
S.~H. Christiansen.
\newblock Stability of {H}odge decompositions in finite element spaces of
  differential forms in arbitrary dimension.
\newblock {\em Numer. Math.}, 107(1):87--106, 2007.

\bibitem{ChrHal09BIT}
S.~H. Christiansen and T.~G. Halvorsen.
\newblock Convergence of lattice gauge theory for {M}axwell's equations.
\newblock {\em BIT}, 49(4):645--667, 2009.

\bibitem{ChrHal11IMA}
S.~H. Christiansen and T.~G. Halvorsen.
\newblock Discretizing the {M}axwell-{K}lein-{G}ordon equation by the lattice
  gauge theory formalism.
\newblock {\em IMA J. Numer. Anal.}, 31(1):1--24, 2011.

\bibitem{ChrHal11SINUM}
S.~H. Christiansen and T.~G. Halvorsen.
\newblock A gauge invariant discretization on simplicial grids of the
  {S}chr\"odinger eigenvalue problem in an electromagnetic field.
\newblock {\em SIAM Journal on Numerical Analysis}, 49(1):331--345, 2011.

\bibitem{ChrWin06}
S.~H. Christiansen and R.~Winther.
\newblock On constraint preservation in numerical simulations of {Y}ang-{M}ills
  equations.
\newblock {\em SIAM J. Sci. Comput.}, 28(1):75--101 (electronic), 2006.

\bibitem{Chr11NM}
S.H. Christiansen.
\newblock On the linearization of regge calculus.
\newblock {\em Numerische Mathematik}, pages 1--28.
\newblock 10.1007/s00211-011-0394-z.

\bibitem{Cia91}
P.~G. Ciarlet.
\newblock Basic error estimates for elliptic problems.
\newblock In {\em Handbook of numerical analysis, {V}ol.\ {II}}, Handb. Numer.
  Anal., II, pages 17--351. North-Holland, Amsterdam, 1991.

\bibitem{DimMul94}
A.~Dimakis and F.~M\"uller-Hoissen.
\newblock Discrete differential calculus: Graphs, topologies, and gauge theory.
\newblock {\em J. Math. Phys.}, 35(12):6703--6735, 1994.

\bibitem{DodPat76}
J.~Dodziuk and V.~K. Patodi.
\newblock Riemannian structures and triangulations of manifolds.
\newblock {\em J. Indian Math. Soc. (N.S.)}, 40(1-4):1--52 (1977), 1976.

\bibitem{DroMor83}
J.M. Drouffe and K.J.M. Moriarty.
\newblock Gauge theories on a simplicial lattice.
\newblock {\em Nuclear Physics, Section B}, 220(3):253--268, 1983.
\newblock cited By (since 1996) 5.

\bibitem{HalMac11}
T.G. Halvorsen and Macdonald~S\o rensen T.
\newblock Simplicial gauge theory and quantum gauge theory simulation.
\newblock {\em Nuclear Physics B}, doi : 10.1016/j.nuclphysb.2011.08.016, 2011.

\bibitem{HauLac93}
Y.~Haugazeau and P.~Lacoste.
\newblock Condensation de la matrice masse pour les \'el\'ements finis mixtes
  de {$H({\rm rot})$}.
\newblock {\em C. R. Acad. Sci. Paris S\'er. I Math.}, 316(5):509--512, 1993.

\bibitem{Hip99}
R.~Hiptmair.
\newblock Canonical construction of finite elements.
\newblock {\em Math. Comp.}, 68(228):1325--1346, 1999.

\bibitem{Hip02}
R.~Hiptmair.
\newblock Finite elements in computational electromagnetism.
\newblock {\em Acta Numer.}, 11:237--339, 2002.

\bibitem{KobNom63}
S.~Kobayashi and K.~Nomizu.
\newblock {\em Foundations of differential geometry. {V}ol {I}}.
\newblock Interscience Publishers, a division of John Wiley \& Sons, New
  York-Lond on, 1963.

\bibitem{Kog83}
J.B. Kogut.
\newblock The lattice gauge theory approach to quantum chromodynamics.
\newblock {\em Reviews of Modern Physics}, 55(3):775--836, 1983.
\newblock cited By (since 1996) 90.

\bibitem{Man06}
E.~L. Mansfield.
\newblock Noether's theorem for smooth, difference and finite element systems.
\newblock In {\em Foundations of computational mathematics, {S}antander 2005},
  volume 331 of {\em London Math. Soc. Lecture Note Ser.}, pages 230--254.
  Cambridge Univ. Press, Cambridge, 2006.

\bibitem{ManQui05}
E.~L. Mansfield and G.~R.~W. Quispel.
\newblock Towards a variational complex for the finite element method.
\newblock In {\em Group theory and numerical analysis}, volume~39 of {\em CRM
  Proc. Lecture Notes}, pages 207--232. Amer. Math. Soc., Providence, RI, 2005.

\bibitem{Mon93}
P.~Monk.
\newblock An analysis of {N}\'ed\'elec's method for the spatial discretization
  of {M}axwell's equations.
\newblock {\em J. Comput. Appl. Math.}, 47(1):101--121, 1993.

\bibitem{Ned80}
J.-C. N{\'e}d{\'e}lec.
\newblock Mixed finite elements in {${\bf R}\sp{3}$}.
\newblock {\em Numer. Math.}, 35(3):315--341, 1980.

\bibitem{Olv93}
P.~J. Olver.
\newblock {\em Applications of {L}ie groups to differential equations}, volume
  107 of {\em Graduate Texts in Mathematics}.
\newblock Springer-Verlag, New York, second edition, 1993.

\bibitem{Pra07}
V.~V. Prasolov.
\newblock {\em Elements of homology theory}, volume~81 of {\em Graduate Studies
  in Mathematics}.
\newblock American Mathematical Society, Providence, RI, 2007.
\newblock Translated from the 2005 Russian original by Olga Sipacheva.

\bibitem{RavTho77}
P.-A. Raviart and J.~M. Thomas.
\newblock A mixed finite element method for 2nd order elliptic problems.
\newblock In {\em Mathematical aspects of finite element methods ({P}roc.
  {C}onf., {C}onsiglio {N}az. delle {R}icerche ({C}.{N}.{R}.), {R}ome, 1975)},
  pages 292--315. Lecture Notes in Math., Vol. 606. Springer, Berlin, 1977.

\bibitem{Reg61}
T.~Regge.
\newblock General relativity without coordinates.
\newblock {\em Nuovo Cimento (10)}, 19:558--571, 1961.

\bibitem{Rot05}
H.~J. Rothe.
\newblock {\em Lattice gauge theories}, volume~74 of {\em World Scientific
  Lecture Notes in Physics}.
\newblock World Scientific Publishing Co. Pte. Ltd., Hackensack, NJ, third
  edition, 2005.
\newblock An introduction.

\bibitem{Str72AP}
G.~Strang.
\newblock Variational crimes in the finite element method.
\newblock In {\em The mathematical foundations of the finite element method
  with applications to partial differential equations ({P}roc. {S}ympos.,
  {U}niv. {M}aryland, {B}altimore, {M}d., 1972)}, pages 689--710. Academic
  Press, New York, 1972.

\bibitem{Wei52}
A.~Weil.
\newblock Sur les th\'eor\`emes de de {R}ham.
\newblock {\em Comment. Math. Helv.}, 26:119--145, 1952.

\bibitem{Wei05I}
S.~Weinberg.
\newblock {\em The quantum theory of fields. {V}ol. {I}}.
\newblock Cambridge University Press, Cambridge, 2005.
\newblock Foundations.

\bibitem{Wei05II}
S.~Weinberg.
\newblock {\em The quantum theory of fields. {V}ol. {II}}.
\newblock Cambridge University Press, Cambridge, 2005.
\newblock Modern applications.

\bibitem{Whi57}
H.~Whitney.
\newblock {\em Geometric integration theory}.
\newblock Princeton University Press, Princeton, N. J., 1957.

\bibitem{Wil74}
K.~G. Wilson.
\newblock {C}onfinement of quarks.
\newblock {\em {Phys. Rev. D}}, {10}({8}):{2445--2459}, {1974}.

\bibitem{Yee66}
K.S. Yee.
\newblock {N}umerical solution of initial boundary value problems involving
  maxwells equations in isotropic media.
\newblock {\em {IEEE} {T}rans. {A}nt. {P}rop.}, {AP14}({3}):{302--\&}, {1966}.

\end{thebibliography}
\bibliographystyle{plain}

\end{document}